\documentclass[11pt,reqno,a4paper]{amsart}

\usepackage[T1]{fontenc}
\usepackage[utf8]{inputenc}

\usepackage{libertinus}
\usepackage{microtype}

\usepackage{geometry}

\usepackage{xcolor}

\usepackage{amsmath,amssymb,amsthm,amsfonts}
\usepackage{mathtools}
\usepackage{mathrsfs}
\usepackage{backref}

\usepackage{graphicx}

\usepackage[
colorlinks=true,
linkcolor=blue,
citecolor=blue,
urlcolor=blue
]{hyperref}

\usepackage[nameinlink,noabbrev]{cleveref}

\usepackage{titlesec}

\titleformat{\section}
  {\normalfont\large\bfseries\centering}
  {\thesection.}
  {0.75em}
  {}

\titlespacing*{\section}
  {0pt}
  {3.2ex plus 1ex minus .2ex}
  {1.6ex plus .3ex}

\titleformat{\subsection}
  {\normalfont\normalsize\bfseries}
  {\thesubsection.}
  {0.75em}
  {}

\titlespacing*{\subsection}
  {0pt}
  {2.4ex plus .8ex minus .2ex}
  {1.0ex plus .2ex}

\titleformat{\subsubsection}
  {\normalfont\normalsize\bfseries\itshape}
  {\thesubsubsection.}
  {0.75em}
  {}

\titlespacing*{\subsubsection}
  {0pt}
  {2.0ex plus .6ex minus .2ex}
  {0.7ex plus .2ex}

\numberwithin{equation}{section}

\usepackage{aliascnt}

\theoremstyle{plain}
\newtheorem{theorem}{Theorem}[section]

\newaliascnt{proposition}{theorem}
\newtheorem{proposition}[proposition]{Proposition}
\aliascntresetthe{proposition}

\newaliascnt{lemma}{theorem}
\newtheorem{lemma}[lemma]{Lemma}
\aliascntresetthe{lemma}

\newaliascnt{corollary}{theorem}

\aliascntresetthe{corollary}

\newtheorem{maintheorem}{Theorem}

\newtheorem{mainobstruction}{Obstruction}

\theoremstyle{definition}
\newaliascnt{definition}{theorem}

\aliascntresetthe{definition}

\theoremstyle{remark}
\newaliascnt{remark}{theorem}
\newtheorem{remark}[remark]{Remark}
\aliascntresetthe{remark}

\usepackage{etoolbox}

\AtBeginEnvironment{proof}{\vspace{6pt}}

\usepackage{enumitem}

\setlist{
itemsep=2pt,
topsep=4pt
}

\usepackage[toc,page]{appendix}

\newcommand{\SL}{\operatorname{SL}}

\newcommand{\cH}{\mathcal{H}}

\newcommand{\cP}{\mathcal{P}}

\newcommand{\cT}{\mathcal{T}}

\newcommand{\bC}{\mathbb{C}}

\newcommand{\bN}{\mathbb{N}}

\newcommand{\bQ}{\mathbb{Q}}
\newcommand{\bR}{\mathbb{R}}

\newcommand{\bZ}{\mathbb{Z}}

\newcommand\subsetsim{\mathrel{%
\ooalign{\raise0.2ex\hbox{$\subset$}\cr\hidewidth\raise-0.8ex\hbox{\scalebox{0.9}{$\sim$}}\hidewidth\cr}}}

\DeclareMathOperator{\Hom}{Hom}

\DeclareMathOperator{\supp}{supp}
\DeclareMathOperator{\trace}{tr}

\DeclareMathOperator{\Mat}{Mat}

\DeclareMathOperator{\Vol}{Vol}

\DeclareMathOperator{\Rat}{Rat}

\usepackage{comment}

\makeatletter
\renewcommand{\l@section}[2]{%
  \par\addpenalty\@secpenalty
  \addvspace{1.0em plus 1pt}%
  \@tempdima 1.5em
  \begingroup
    \parindent \z@
    \rightskip \@pnumwidth
    \parfillskip -\@pnumwidth
    \leavevmode
    \bfseries
    \advance\leftskip\@tempdima
    \hskip -\leftskip
    #1\nobreak\hfil \nobreak\hb@xt@\@pnumwidth{\hss #2}\par
  \endgroup}
\makeatother

\usepackage{bbm}
\usepackage{amscd}
\usepackage[all,cmtip]{xy}
\usepackage{changepage}
\usepackage{calc}
\usepackage{scalerel}
\usepackage{extarrows}
\usepackage{aliascnt}

\DeclareMathSymbol{\shortminus}{\mathbin}{AMSa}{"39}

\newcommand{\ignore}[1]{}

\DeclareMathOperator{\Eig}{Eig}

\DeclareMathOperator{\Span}{Span}

\DeclareMathOperator{\VolSpec}{VolSpec}
\DeclareMathOperator{\TraceSpec}{TraceSpec}
\DeclareMathOperator{\EhrSpec}{EhrSpec}

\begin{document}

\title[Trace spectra of simplices in large sets]{Trace spectra of simplices in large sets}

\author[M. Bj\"orklund]{Michael Bj\"orklund}
\address{Department of Mathematics, Chalmers University of Technology and University of Gothenburg, Gothenburg, Sweden}
\email{micbjo@chalmers.se}

\author[A. Fish]{Alexander Fish}
\address{School of Mathematics and Statistics, University of Sydney, NSW 2006, Australia}
\email{alexander.fish@sydney.edu.au}

\author[S. Sanadhya]{Shrey Sanadhya}
\address{School of Mathematics and Statistics, University of Sydney, NSW 2006, Australia}
\email{shrey.sanadhya@sydney.edu.au}

\subjclass[2020]{37A30, 37A44, 11B30}
\keywords{Quantitative directional expansivity, simplices, colorings of $\mathbb{R}^d$, multiple correlations}

\begin{abstract}
Given an ordered tuple \(\mathbf v=(v_0,\ldots,v_d)\) of vectors in \(\bR^d\), let
\(A_{\mathbf v}=[\,v_1-v_0\ \cdots\ v_d-v_0\,]\) be its edge matrix. We prove
that, in every finite colouring of \(\bR^d\), one colour class realizes every
prescribed value of the higher characteristic coefficients
\[
        (c_2(A_{\mathbf v}),\ldots,c_d(A_{\mathbf v})).
\]
This extends Graham's theorem on volumes, which corresponds to the last
coefficient \(c_d(A_{\mathbf v})=\det(A_{\mathbf v})\). We also prove a
discrete analogue: if \(E\subseteq\bZ^d\) has positive upper Banach density,
then, for some \(q\geq 1\), the set of coefficient tuples realized by ordered
tuples in \(E\) contains
\[
        q^2\bZ\times q^3\bZ\times\cdots\times q^d\bZ .
\]
Finally, we show that the ordinary trace \(c_1(A_{\mathbf v})\) cannot be added
to these conclusions. The proof combines a quantitative directional expansion
result for ergodic actions of free abelian groups with a trace calculation for
a family of model edge matrices.
\end{abstract}

\maketitle

\section{Introduction} 
\subsection{Volume spectra and Graham's theorem}

Let \(E\subseteq\bR^d\). The \emph{volume spectrum} of \(E\) is the set of volumes of
\(d\)-simplices with vertices in \(E\):
\[
        \VolSpec(E)
        =
        \{\Vol(\Delta_{\mathbf v}):
        \mathbf v=(v_0,\ldots,v_d)\in E^{d+1}
        \text{ is affinely independent}\},
\]
where $\Delta_{\mathbf v} = \operatorname{conv}\{v_0,\ldots,v_d\}$ denotes the convex hull. If $v_1-v_0,\ldots,v_d-v_0$ are linearly independent, we say that $\Delta_{\mathbf v}$ is a \emph{$d$-simplex}.

Equivalently, if
\[
        A_{\mathbf v}
        =
        [\,v_1-v_0\ \cdots\ v_d-v_0\,],
\]
is the \emph{edge matrix} of $\mathbf v$, then
\[
        \Vol(\Delta_{\mathbf v})
        =
        \frac{1}{d!}|\det(A_{\mathbf v})|.
\]
Graham proved that, in every finite colouring of \(\bR^d\), one colour class
contains \(d\)-simplices of every prescribed volume \cite{Graham_1980}.

\begin{theorem}[Graham]\label{Thm: Graham_volume}
Let \(d\geq 1\), and let
\[
        \bR^d=C_1\sqcup\cdots\sqcup C_r
\]
be a finite colouring. Then there exists \(1\leq i_0\leq r\) such that
\[
        \VolSpec(C_{i_0})=(0,\infty).
\]
\end{theorem}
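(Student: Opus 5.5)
Graham's theorem has a classical combinatorial proof, and I will follow that route (Gallai/van der Waerden plus a pigeonhole over scalings) rather than the ergodic machinery developed later in the paper. The argument has two steps: first show that some colour class contains simplices of all sufficiently small volumes, then remove the restriction.

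\textbf{Reduction.} It suffices to prove the following. For every finite colouring there is a colour $i$ and an $\varepsilon>0$ such that, for every $V\in(0,\varepsilon)$, the class $C_i$ contains a $d$-simplex of volume exactly $V$. Granting this, fix $t>0$ and apply it to the pulled-back colouring $x\mapsto$ colour of $tx$. This gives a colour $i(t)$ that contains, after rescaling, simplices of every volume in $(0,t^d\varepsilon(t))$. Take $t=1,2,3,\ldots$. By pigeonhole some colour $i_0$ occurs for infinitely many $t$. Since $t^d\varepsilon(t)$ may shrink, I would strengthen the claim so that $\varepsilon$ depends only on $r$ and $d$, not on the colouring. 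Then $t^d\varepsilon\to\infty$ along that subsequence, and $\VolSpec(C_{i_0})=(0,\infty)$.

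\textbf{Main step.} I prove the uniform claim by induction on $d$, building a simplex as a base in a hyperplane together with an apex at prescribed height. For $d=1$: among the $r+1$ points $0,s,2s,\dots,rs$ two share a colour. Hence every colour class avoiding some distance in $(0,\varepsilon)$ would be impossible to arrange simultaneously, and the uniform statement follows by choosing the witnessing colour through a Ramsey-type/compactness argument.

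For general $d$, I use the Gallai--Witt theorem (multidimensional van der Waerden). Every finite colouring of $\bZ^d$ contains a monochromatic homothetic copy $a+\lambda F$, with $\lambda\in\bZ_{>0}$, of any fixed finite configuration $F$. Taking $F$ to be a large grid and $\lambda$ bounded in terms of $F$ and $r$ yields a monochromatic scaled grid. Its lattice simplices realize all volumes of the form $\lambda^d m/d!$ with $m$ in a long initial range.

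To reach an exact prescribed real volume $V$, I embed the grid in a continuous family. I consider the colourings of $\bR^d$ restricted to $x_0+s\bZ^d$ as $s$ varies over an interval, apply Gallai--Witt in each, and pigeonhole over colours and over the finitely many possible $(\lambda,m)$. This produces a colour $i$ and a fixed pair $(\lambda,m)$ realized for a set of scales $s$. The realized volumes are $\lambda^d m s^d/d!$. To cover every value, I need this set of scales to be everything, not merely infinite, so I instead apply the pigeonhole for each fixed target $V$ separately. I choose $s=(d!V/\lambda^dm)^{1/d}$ after first fixing the finitely many candidate pairs $(\lambda,m)$. The difficulty is that the colour obtained then depends on $V$.

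\textbf{Main obstacle.} The crux is exactly this dependence: the colour depends on $V$, while the theorem requires a single colour for all volumes. I would handle it by contradiction. Suppose that for each colour $i$ there is a missing volume $V_i$. Form the finite set of ratios $V_i/V_j$ and choose a configuration $F\subset\bZ^d$ in which lattice simplices realize, at a common scale, volumes proportional to every $V_i$. Realizing irrational ratios needs a non-lattice configuration, so I would take $F$ to be a finite set of points built from simplices with the prescribed volumes, and all of its homothets appear in Gallai--Witt; in fact, Gallai--Witt applies to any finite subset of $\bR^d$. A monochromatic homothet $a+\lambda F$ with $\lambda$ adjustable over a dense set then gives, by a suitable choice of $\lambda$ via a further rescaling trick, colour $i$ containing volume $V_i$, a contradiction. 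Making $\lambda$ hit exactly the needed value is the delicate point. I would first try to resolve it by making $F$ contain simplices of volumes $V_i\mu^{-d}$ for all $\mu$ in the finite set of possible $\lambda$, which Gallai--Witt bounds in terms of $F$ and $r$. This requires a careful fixed-point/compactness argument, because the set of possible $\lambda$ depends on $F$, and I expect this circularity to be the main obstacle.
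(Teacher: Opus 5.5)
\textbf{Gap 1: the base case is false.} In fact the statement as printed fails for $d=1$. Colour $x\in\bR$ by the parity of $\lfloor x\rfloor$. If $y=x+1$ then $\lfloor y\rfloor=\lfloor x\rfloor+1$, so no colour class contains two points at distance $1$, and $1\notin\VolSpec(C_1)\cup\VolSpec(C_2)$. The rescaled colouring by the parity of $\lfloor 2x/\varepsilon\rfloor$ also refutes your uniform small-volume claim in $d=1$, for every choice of $\varepsilon$. Graham's theorem, like \Cref{thm:euclidean-main}, needs $d\geq 2$.

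Pigeonhole on $0,s,\dots,rs$ only gives a monochromatic distance somewhere in $\{s,2s,\dots,rs\}$, never a prescribed one. No Ramsey or compactness argument can upgrade this, as the example shows. So an induction on $d$ anchored at $d=1$ cannot start. Separately, your ``reduction'' gains nothing: by scaling, the uniform small-volume statement is equivalent to the theorem itself.

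\textbf{Gap 2: for $d\geq 2$ the essential step is missing, and you have mislocated the obstacle.} Making the colour independent of $V$ is the easy part. Once you know that, for each fixed $V>0$, every finite colouring of $\bR^d$ has a monochromatic $d$-simplex of volume $V$, those simplices form a Ramsey family. Graham's product theorem (\Cref{Thm: Graham_product}) then gives one colour class for all $V$ at once. This is exactly how the paper concludes the proof of \Cref{thm:euclidean-main}.

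The real difficulty is the per-volume statement, and your Gallai--Witt argument does not deliver it even for a single $V$. The dilation $\lambda$ in a monochromatic copy $a+\lambda F$ is not under your control, and hence neither is the pair $(\lambda,m)$ obtained at a given scale $s$. Since volumes scale like $\lambda^d$, both of your proposed repairs are circular:
\begin{itemize}
\item choosing $s=(d!V/\lambda^d m)^{1/d}$ presupposes the pair $(\lambda,m)$, which is only determined after $s$ is fixed;
\item adding to $F$ simplices of volume $V\mu^{-d}$ for all admissible $\mu$ enlarges the Gallai--Witt bound, and hence the admissible range of $\mu$.
\end{itemize}
A configuration found only up to an uncontrolled dilation cannot pin down an exact volume.

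\textbf{The missing idea} is to break the scaling symmetry. Arrange that the volume factors as
\begin{itemize}
\item a quantity ranging over a finite, a priori bounded set, produced by pigeonhole or recurrence inside a face, times
\item a parameter you may prescribe afterwards, namely which line parallel to a fixed direction $u$ carries the last vertex.
\end{itemize}
The position of that vertex along the line should not affect the volume. You then need an expansion statement guaranteeing that the relevant colour class meets every such line.

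The paper does not prove Graham's theorem; it cites \cite{Graham_1980}. However, \Cref{thm:euclidean-main} with $a_d=d!\,V$ recovers it for $d\geq 2$ by exactly this mechanism:
\begin{itemize}
\item $\det Q(s,m,n,t)=(-1)^{d-1}n_1\prod_{j}m_j$ is independent of $t$;
\item the $m_j$ are bounded by $L$ via quantitative recurrence (\Cref{Lem: model_configurations});
\item the last column $(n_1,\dots,n_{d-1},0)+tu$ is reached by directional expansion along $u=(s_1,\dots,s_{d-1},1)$;
\item the finitely many possible moduli are absorbed by putting $a_d/\kappa^d$ into $\Gamma$, with $\kappa$ divisible by all of them.
\end{itemize}
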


The point is that the colour class \(C_{i_0}\) is chosen once and then realizes
all prescribed volumes. Related formulations and variants of Graham's theorem
have since been studied; see, for instance, \cite{Kovac_2023}.

A discrete analogue concerns subsets of \(\bZ^d\) of positive upper Banach
density. For \(E\subseteq\bZ^d\), the upper Banach density of \(E\) is
\[
        d^*(E)
        =
        \limsup_{N\to\infty}
        \sup_{t\in\bZ^d}
        \frac{|E\cap(t+[-N,N]^d)|}{|[-N,N]^d|}.
\]
For every \(\delta>0\) and \(d\geq 1\), there exists
\(q=q(\delta,d)\in\bN\) such that whenever \(d^*(E)\geq\delta\),
\[
        q\bZ\subseteq d! \cdot\VolSpec(E).
\]
Thus the volume spectrum of a positive-density subset of \(\bZ^d\) contains an
infinite arithmetic progression, after the standard normalization by \(d!\).
The qualitative statement was proved in \cite{Bjorklund_Fish_2024}; the
uniform dependence of \(q\) on \(\delta\) and \(d\) follows from
\cite{Fish_Skinner_2025}.

Since volume is the determinant of the edge matrix, Graham's theorem can be
viewed as a statement about the last coefficient of the characteristic
polynomial of \(A_{\mathbf v}\). The purpose of this paper is to study what
happens when one asks for more of this characteristic data.

\subsection{Trace spectra and main results}

Let \(A\in\Mat_d(\bR)\). We write its characteristic polynomial as
\[
        \det(xI-A)
        =
        x^d-c_1(A)x^{d-1}+c_2(A)x^{d-2}
        -\cdots+(-1)^d c_d(A).
\]
Recall that
\[
        c_k(A)=\trace(\wedge^k A),
        \qquad 1\leq k\leq d,
\]
where \(\wedge^k A\) denotes the induced map on \(\wedge^k\bR^d\). Equivalently,
\(c_k(A)\) is the sum of the \(k\times k\) principal minors of \(A\). In
particular,
\[
        c_1(A)=\trace(A),
        \qquad
        c_d(A)=\det(A).
\]

Graham's theorem concerns the last coefficient
\(c_d(A_{\mathbf v})=\det(A_{\mathbf v})\). In this paper we consider the
coefficients
\[
        c_2(A_{\mathbf v}),\ldots,c_d(A_{\mathbf v}).
\]
We write
\[
        \mathbf{Tr}(A)=(c_2(A),\ldots,c_d(A)).
\]
The coefficient \(c_1(A)\), the ordinary trace, is not included. This omission
is necessary for the main theorems to hold; see \Cref{obs:euclidean-trace} and \Cref{obs:discrete-trace} below.

Let \(E\subseteq\bR^d\). For any ordered tuple
\(\mathbf v=(v_0,\ldots,v_d)\in E^{d+1}\), not necessarily affinely
independent, we call
\[
        A_{\mathbf v}
        :=
        [\,v_1-v_0\ \cdots\ v_d-v_0\,]\in\Mat_d(\bR)
\]
the \emph{edge matrix} of \(\mathbf v\). The \emph{trace spectrum} of \(E\) is
\[
        \TraceSpec(E)
        =
        \{\mathbf{Tr}(A_{\mathbf v}) :
        \mathbf v=(v_0,\ldots,v_d)\in E^{d+1}\}.
\]
Thus the trace spectrum is defined for arbitrary ordered tuples, not only for
ordered \(d\)-simplices. The ordering is part of the definition:
\(\mathbf{Tr}(A_{\mathbf v})\) is not an invariant of the underlying unordered
set of vertices.

Our first main result is the following extension of Graham's theorem.

\begin{maintheorem}\label{thm:euclidean-main}
Let \(d\geq 2\), and let
\[
        \bR^d=C_1\sqcup\cdots\sqcup C_r
\]
be a finite colouring. Then there exists \(1\leq i_0\leq r\) such that
\[
        \TraceSpec(C_{i_0})=\bR^{d-1}.
\]
\end{maintheorem}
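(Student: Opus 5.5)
The plan is to reduce to a single finite-rank configuration problem and then use a recurrence argument along a family of model edge matrices whose coefficients $c_2,\dots,c_d$ can be tuned independently while $c_1$ is left free.

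\emph{Step 1 (choosing the colour once).} For a finite set $T\subseteq\bR^{d-1}$ of targets, say a colour $i$ is \emph{good for $T$} if $T\subseteq\TraceSpec(C_i)$. I will show that for every finite $T$ at least one colour is good. Granting this, fix for each finite $T$ a good colour $i(T)$. Suppose no single colour were good for every finite $T$. Then for each $i$ there would be a finite $T_i$ for which $i$ is not good. A colour good for $T_1\cup\cdots\cup T_r$ is good for each $T_i$, which is a contradiction. Hence some $i_0$ is good for every finite $T$, and so $\TraceSpec(C_{i_0})=\bR^{d-1}$. It therefore suffices to show that for each finite $T$ some colour class realizes all of $T$.

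\emph{Step 2 (passing to a lattice and a positive-density colour).} Fix a finite $T$. I will choose an injective homomorphism $\varphi\colon\bZ^m\to\bR^d$ for suitable $m$, built from $d$ ``model'' generating vectors together with free scale parameters adapted to $T$. The pulled-back colouring $\bZ^m=\varphi^{-1}(C_1)\sqcup\cdots\sqcup\varphi^{-1}(C_r)$ has a class $E=\varphi^{-1}(C_i)$ with $d^*(E)\geq 1/r$. By Furstenberg's correspondence principle, $E$ yields a measure-preserving ergodic $\bZ^m$-system $(X,\mu)$ and a set $B$ with $\mu(B)\geq 1/r$.

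Realizing $\tau\in T$ then amounts to finding $x\in E$ and vectors $n_1,\dots,n_d\in\bZ^m$ with $x+n_j\in E$ for all $j$ and $\mathbf{Tr}\bigl([\varphi(n_1)\cdots\varphi(n_d)]\bigr)=\tau$. I will take the $n_j$ of the form $n_j=k_j w_j$ along fixed directions $w_j$, and prescribed integer multiples of a common scale $k$. This makes the edge matrix $A(k,\cdot)=\varphi$-image of a model matrix, and each $c_\ell(A)$ is an explicit polynomial in the parameters.

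The model I have in mind is companion-type: columns close to $e_2,\dots,e_d$ plus one ``free'' column. Its principal-minor sums $c_2,\dots,c_d$ are then (up to the homogeneity $c_\ell(sA)=s^\ell c_\ell(A)$) affine-triangular in the entries of the free column, with $c_1$ being the diagonal entry, which I leave unconstrained. Solving this triangular system determines the real scales built into $\varphi$ so that a fixed integer configuration gives exactly $\tau$. Doing this simultaneously for all $\tau\in T$ is possible because $m$ may be taken large: each target gets its own block of generators. This is the ``trace calculation for model edge matrices'' step, and it is routine linear algebra once the model is fixed. The essential point is that dropping $c_1$ leaves one degree of freedom per column, which absorbs the constraint.

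\emph{Step 3 (recurrence, the main obstacle).} I then need $\mu\bigl(B\cap T^{-n_1}B\cap\cdots\cap T^{-n_d}B\bigr)>0$ for the specific $n_j$ forced by Step 2, possibly after a common rescaling of the whole configuration, which shifts $\tau$ only by the homogeneity factors already built into $\varphi$. Plain multidimensional Szemerédi gives positivity only for \emph{some} scale. That is why I will use the quantitative directional expansion result for ergodic $\bZ^m$-actions: for $B$ of positive measure, along the chosen directions the set of realizable difference vectors is syndetic/expansive enough to contain every multiple of some $q$. The scales in $\varphi$ are then chosen \emph{after} $q$ is known, by rescaling the model by $q$ and correcting the free column accordingly. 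The delicate point, which I expect to be the hardest, is arranging the dependence correctly. The directional expansion constant $q$ must depend only on $(d,r)$, or at least not on $\varphi$, so that $\varphi$ can be adjusted to $q$ without changing $q$. I would first try to obtain this uniformity via the $\delta$-uniform version (as in the uniform dependence of $q$ on $\delta$ and $d$ cited after \Cref{Thm: Graham_volume}), applied with $\delta=1/r$.
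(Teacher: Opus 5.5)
Your Step 1 is sound, and it is a more elementary substitute for the paper's use of Graham's product theorem. The finite-\(T\) version it requires does follow from the paper's dynamical theorem. Take \(\Gamma\) generated by \(1\) and all \(a_k/\kappa^k\) with \(a\in T\); its rank is at most \(1+(d-1)|T|\), so \(\kappa\) can be fixed before \(\Gamma\). This also settles your order-of-quantifiers concern. The genuine gap is Step 3.

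\textbf{Why Step 3 fails.} Step 3 needs one colour class to contain a translate of a configuration that is \emph{prescribed} up to the fixed scale \(q\). Your \(\varphi\) and integer vectors are determined by \(T\), by \(q=q(d,r,m)\) and by the triangular system, not by the colouring. No such statement can hold. If \(w\neq 0\) is an edge of a fixed configuration, the \(2\)-colouring \(y\mapsto\lfloor\langle y,w\rangle/|w|^2\rfloor \bmod 2\) always gives \(y\) and \(y+w\) different colours, so no translate is monochromatic.

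The dynamical input also does not give what you claim. Return-time sets along a fixed vector can be Bohr-type sets containing no set \(q\bZ\setminus\{0\}\); an irrational rotation with \(B\) a short arc already shows this. The directional expansion theorem only asserts \(\nu(\bigcup_{\tau\in k\bZ}\tau u.B)>1-\varepsilon\). That is, it gives the \emph{existence} of some \(\tau\) with \(\tau u.B\) meeting a given set of measure \(>\varepsilon\). Moreover, it holds only for a direction \(u=(s_1,\ldots,s_{d-1},1)\) selected by the system through the haystack, after passing to a finite-index subgroup and an ergodic component. It says nothing along directions \(w_j\) fixed in advance.

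\textbf{The missing idea.} The configurations must range over a family in which every parameter chosen by recurrence leaves \(c_2,\ldots,c_d\) under control. The paper uses the model \(Q(s,m,r,t)=A_sM_{m,r,t}\):
\begin{itemize}
\item the direction \(s\) comes from directional expansion;
\item the subdiagonal entries \(m_j\in h\bZ\) are whatever quantitative Poincar\'e recurrence returns, bounded only by \(L\);
\item the last column is \((r_1,\ldots,r_{d-1},0)+tu\), with \(r\in h\Gamma^{d-1}\) prescribed and \(t\in h\bZ\) chosen afterwards.
\end{itemize}
The choice of \(t\) is possible for \emph{every} \(r\) because translation by \(g_r\in h\Lambda\) preserves \(\nu\), so \(\nu(\bigcup_\tau(g_r+\tau u).B)>1-\varepsilon_0\). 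The trace calculation then shows that \(c_2,\ldots,c_d\) do not depend on \(t\). As \(r\) varies, the tuples \((c_2,\ldots,c_d)\) fill a lattice containing \(\kappa^2\Gamma\times\cdots\times\kappa^d\Gamma\), with \(\kappa\) depending only on \(\delta\), \(\operatorname{rank}\Gamma\) and \(d\). The real target is reached by building \(a_k/\kappa^k\) into \(\Gamma\), not into a fixed configuration. In your plan, the freedom gained by dropping \(c_1\) is spent on solving the triangular system, so nothing is left to absorb the recurrence.
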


In other words, one colour class realizes every prescribed tuple
\[
        (a_2,\ldots,a_d)\in\bR^{d-1}
\]
as
\[
        (c_2(A_{\mathbf v}),\ldots,c_d(A_{\mathbf v}))
\]
for some ordered tuple \(\mathbf v\in C_{i_0}^{d+1}\). If the prescribed tuple satisfies \(a_d\neq 0\), then the corresponding edge
matrix has non-zero determinant. Hence the ordered tuple
\(\mathbf v=(v_0,\ldots,v_d)\) is affinely independent and spans a
\(d\)-simplex.

\begin{remark}
No measurability assumption is made on the colour classes in
\Cref{thm:euclidean-main}. This is the same level of generality as Graham's
finite-colouring theorem. It is stronger in this respect than many analytic
Euclidean Ramsey and density results, where measurable colour classes or
measurable positive-density sets are part of the hypotheses; see, for instance,
\cite{Kovac_2023}.
\end{remark}

For positive-density subsets of \(\bZ^d\), we prove the following discrete
version.

\begin{maintheorem}\label{thm:discrete-main}
Let \(d\geq 2\), and let \(E\subseteq\bZ^d\) satisfy \(d^*(E)>0\). Then there
exists \(q\in\bN\) such that
\[
        q^2\bZ\times q^3\bZ\times\cdots\times q^d\bZ
        \subseteq
        \TraceSpec(E).
\]
Moreover, \(q\) may be chosen depending only on the rank \(d\) and on the upper Banach density \(d^*(E)\).
\end{maintheorem}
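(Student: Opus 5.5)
The plan is to reduce \Cref{thm:discrete-main} to a recurrence statement for a single, explicitly chosen family of ordered tuples whose edge matrices are scaled companion matrices. I would choose this family so that the prescribed data \((c_2,\ldots,c_d)\) are read off from linear coordinates of one free vector, while the ordinary trace \(c_1\) is carried by one coordinate that we do not control.

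\textbf{Step 1: model matrices.} For \(q\in\bN\) and \(w=(w_1,\ldots,w_d)\in\bZ^d\), let
\[
A_{q,w}=[\,q e_2\ \ q e_3\ \cdots\ q e_d\ \ w\,],
\]
so that \(A_{q,w}e_i=qe_{i+1}\) for \(i<d\) and \(A_{q,w}e_d=w\). This matrix has ``subdiagonal'' entries \(q\) and last column \(w\). Expanding \(\det(xI-A_{q,w})\) along the last column, as for a companion matrix, should give
\[
c_k(A_{q,w})=\pm q^{k-1}w_{d-k+1},\qquad 1\le k\le d,
\]
with explicit signs. In particular \(c_1=w_d\), and \(c_2,\ldots,c_d\) depend only on \(w'=(w_1,\ldots,w_{d-1})\). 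If \(w'\) ranges over \(q\bZ^{d-1}\), then \((c_2,\ldots,c_d)\) ranges over all of \(q^2\bZ\times\cdots\times q^d\bZ\). The coordinate \(w_d\) is completely free. This is the ``trace calculation for model edge matrices'', and it is routine.

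\textbf{Step 2: reduction to a recurrence problem.} It now suffices to find \(q\) such that, for every \(w'\in q\bZ^{d-1}\), there are \(x\in\bZ^d\) and \(n\in\bZ\) with
\[
x,\ x+qe_2,\ \ldots,\ x+qe_d,\ x+(w',n)\in E .
\]
Set \(B_q=E\cap(E-qe_2)\cap\cdots\cap(E-qe_d)\). The condition becomes
\[
(w',n)\in E-B_q \quad\text{for some } n\in\bZ,
\]
that is, the projection of \(E-B_q\) to the first \(d-1\) coordinates contains \(q\bZ^{d-1}\). By the multidimensional Szemer\'edi theorem (Furstenberg--Katznelson), in its uniform form, there is \(q\le Q(\delta,d)\) with \(d^*(B_q)\ge\eta(\delta,d)>0\). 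I would pass to a measure-preserving \(\bZ^d\)-system \((X,\mu,T)\) via the Furstenberg correspondence principle, and take an ergodic component so that \(E\) and \(B_q\) correspond to sets \(\tilde E,\tilde B\) of measure at least \(\delta\) and \(\eta\) respectively. One technical point is that \(q\) must be chosen before the ergodic decomposition, so this step has to be arranged with some care.

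\textbf{Step 3: directional expansion (main obstacle).} We need \(\mu(\tilde B\cap T^{-(w',n)}\tilde E)>0\) for some \(n\), for every \(w'\in q\bZ^{d-1}\). By the mean ergodic theorem along the direction \(e_d\),
\[
\frac1N\sum_{n<N}\mu\big(\tilde B\cap T^{-(w',n)}\tilde E\big)\longrightarrow \int \mathbb E[1_{\tilde B}\mid\mathcal I_d]\cdot T^{(w',0)}\mathbb E[1_{\tilde E}\mid\mathcal I_d]\,d\mu ,
\]
where \(\mathcal I_d\) is the \(\sigma\)-algebra of \(T^{e_d}\)-invariant sets. On this factor \(\bZ^d\) acts ergodically through \(\bZ^{d-1}\). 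So the question is whether the translates of \(\supp\mathbb E[1_{\tilde E}\mid\mathcal I_d]\) by \(q\bZ^{d-1}\) all meet \(\supp\mathbb E[1_{\tilde B}\mid\mathcal I_d]\). This is where I expect the real difficulty, and where the paper's quantitative directional expansion result should enter. That result should say that, after replacing \(q\) by a bounded multiple \(q'=q'(\delta,d)\) to absorb the finite rational (Kronecker) part of the action of \(q\bZ^{d-1}\) on the invariant factor, the \(e_d\)-direction expands sets of measure at least \(\delta\) to have conditional expectation bounded below on the relevant \(q'\bZ^{d-1}\)-ergodic component. Both supports would then contain that component.

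I would first try to prove this by decomposing \(\mathcal I_d\) into \(q'\bZ^{d-1}\)-ergodic components; there are at most \(1/\delta\)-many relevant ones, which gives the bound on \(q'\). I would then apply Step 2 with \(q'\) in place of \(q\), noting that \(q'e_j\)-configurations are still handled by the uniform Szemer\'edi input. Finally I would take \(q=q'\) in Step 1.
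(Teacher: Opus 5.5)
Your Step 1 is correct (it is the paper's model $Q(s,m,n,t)$ with $s=0$, $m_j=q$, $t=w_d$). Step 3, however, has a genuine gap. The fixed direction $e_d$ does not expand in general, and the obstruction is not rational, so no bounded multiple $q'$ can repair it.

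Take $d=2$, $\alpha$ irrational, and the ergodic system $X=\bR/\bZ$ in which $e_1$ acts by $x\mapsto x+\alpha$ and $e_2$ acts trivially. Let $\tilde E=\tilde B$ be an arc of length $1/5$. This is the system of $E=\{(a,b):\|a\alpha\|_{\bR/\bZ}<1/10\}$, for which $B_q=E$.
\begin{itemize}
\item Here $\mathcal I_2$ is everything, so $\mathbb E[1_{\tilde E}\mid\mathcal I_2]=1_{\tilde E}$.
\item Consequently $\mu(\tilde B\cap T^{-(w_1,n)}\tilde E)=0$ for every $n$ as soon as $\|w_1\alpha\|_{\bR/\bZ}\geq 1/5$.
\item Since $c_2(A_{q,w})=-qw_1$, a target value $a\in q^2\bZ$ forces $w_1=-a/q\in q\bZ$.
\item Since $q\bZ\alpha$ is dense in $\bR/\bZ$, some $a\in q^2\bZ$ is missed by your configurations, and this happens for every $q$.
\end{itemize}
This system has no nontrivial rational spectrum, so passing to $q'\bZ^{d-1}$-ergodic components changes nothing. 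What destroys the overlap you need are irrational eigenvalues of $\bZ^{d-1}$ on the $e_d$-invariant factor.

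The missing idea is that the direction must be chosen depending on the system, and the model matrix must be designed to tolerate a tilted direction.
\begin{itemize}
\item \textbf{Choosing the direction.} The paper picks $u=(s_1,\ldots,s_{d-1},1)$ from the haystack of \Cref{Prop: Haystack_s_i}. After the measure-increment iteration (\Cref{Lem: measure_increment}), one may assume that the spectral mass of $B$ on $\Rat_M(\Lambda)$ is small. The sets $L_h^\perp\setminus(\Rat_M(\Lambda)\cup\{1\})$, $h\in\cH_M$, overlap at most $R-1$ times. Hence some $u$ has $\sigma_B(L_u^\perp\setminus\{1\})$ small.
\item \textbf{Near-full saturation.} \Cref{Lem: sigma_B} then gives $\nu(\bigcup_{\tau}\tau u.B)>1-\varepsilon_0$. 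This beats $\nu(B')>\varepsilon_0$ by pigeonhole, which is a stronger conclusion than the support-overlap condition you were aiming for.
\item \textbf{Adapting the model.} To use such a $u$, the free parameter $t$ must run along $u$ rather than $e_d$, without changing $c_2,\ldots,c_d$. This is why the paper uses $A_sM_{m,n,t}$. Its $(d-1)$-th column $m_{d-1}u$ is parallel to the $tu$-part of the last column. So $t$ drops out of every non-vanishing principal minor, and $s$ merely shifts $n$ (\Cref{Lem: ck-formula-Q}).
\item \textbf{The example revisited.} In the system above, $u=(1,1)$ acts ergodically, and the tilted configurations succeed.
\end{itemize}

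Finally, the multidimensional Szemer\'edi theorem is unnecessary. The paper allows different $m_j=h\ell_j$, obtained by iterated quantitative Poincar\'e recurrence along $hA_se_{j+1}$. It then absorbs them into a single $\kappa=q\prod_j|m_j|$.
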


The exponents \(2,\ldots,d\) come from homogeneity:
\[
        c_k(qA)=q^k c_k(A).
\]
Thus the subgroup in \Cref{thm:discrete-main} is the trace spectrum one expects
from simplices whose edge vectors lie in \(q\bZ^d\).

\subsection{Relation to Ehrhart spectra}

There is another natural way to refine the volume spectrum in the lattice
setting. If \(\Delta\subseteq\bR^d\) is a lattice simplex, its Ehrhart
polynomial is defined by
\[
        P_\Delta(t)=|t\Delta\cap\bZ^d|,
        \qquad t\in\bN.
\]
The leading coefficient of \(P_\Delta\) is the Euclidean volume of \(\Delta\).
Thus the Ehrhart polynomial contains the volume, but also further arithmetic
information about the lattice simplex. For \(E\subseteq\bZ^d\), define its
\emph{Ehrhart spectrum} by
\[
        \EhrSpec(E)
        =
        \{P_{\Delta_{\mathbf v}} :
        \mathbf v=(v_0,\ldots,v_d)\in E^{d+1}
        \text{ and } v_0,\ldots,v_d
        \text{ are affinely independent}\}.
\]

In \cite{Bjorklund_Cullman_Fish_2025}, the Ehrhart spectrum of
positive-density subsets of \(\bZ^d\) was studied. The main result there says
that if \(E\subseteq\bZ^d\) has positive upper Banach density, then there
exists \(q\in\bN\) such that
\[
        \EhrSpec(q\bZ^d)\subseteq\EhrSpec(E).
\]

The present paper concerns a different refinement of volume. Ehrhart
polynomials are invariant under translations by vectors in \(\bZ^d\) and under
the natural action of \(\SL_d(\bZ)\). This invariance is central in
\cite{Bjorklund_Cullman_Fish_2025}: it allows one to move lattice simplices by
\(\SL_d(\bZ)\) without changing their Ehrhart polynomials.

The trace spectrum behaves differently. Let
\(\mathbf v=(v_0,\ldots,v_d)\in(\bZ^d)^{d+1}\). If
\(\gamma\in\SL_d(\bZ)\), then
\[
        A_{\gamma\mathbf v}=\gamma A_{\mathbf v}.
\]
The coefficients \(c_k\) are invariant under conjugation, but not under left
multiplication. Hence the tuple
\(\mathbf{Tr}(A_{\mathbf v})\) is not preserved by the natural
\(\SL_d(\bZ)\)-action. This is the basic difference from the Ehrhart setting:
there is no corresponding symmetry which organizes the trace spectrum.

\subsection{The linear trace obstruction}

The ordinary trace \(c_1(A)=\trace(A)\) is not included in the definition of
\(\mathbf{Tr}(A)\). This is not just a feature of the proof. The analogues of
Theorems A and B fail if \(c_1\) is included.

\begin{mainobstruction}\label{obs:euclidean-trace}
There is a measurable colouring of \(\bR^2\) with \(49\) colours such that, for every
colour class \(C\), the set
\[
        \{\trace(A_{\mathbf v}) :
        \mathbf v=(v_0,v_1,v_2)\in C^3
        \text{ and } v_0,v_1,v_2
        \text{ are affinely independent}\}
\]
is not all of  \(\bR\). Consequently, for every \(d\geq 2\), there is a finite
measurable colouring of \(\bR^d\) for which no colour class realizes all values of
\(c_1(A_{\mathbf v})\).
\end{mainobstruction}

\begin{mainobstruction}\label{obs:discrete-trace}
For every \(d\geq 2\), there is a set \(E\subseteq \bZ^d\) with positive upper
Banach density such that
\[
        \{\trace(A_{\mathbf v}) :
        \mathbf v=(v_0,\ldots,v_d)\in E^{d+1}
        \text{ and } v_0,\ldots,v_d
        \text{ are affinely independent}\}
\]
does not contain any finite-index subgroup of \(\bZ\).
\end{mainobstruction}

\subsection{Ideas of the proof}

We briefly describe the proof of \Cref{thm:euclidean-main} and
\Cref{thm:discrete-main}. The main step is a dynamical trace-spectrum theorem
for actions of groups of the form
\[
        \Gamma^{d-1}\times\bZ,
\]
where \(\Gamma<\bR\) is finitely generated. The discrete theorem follows from
this dynamical result by the correspondence principle. The Euclidean colouring
result is then obtained by applying the discrete theorem inside suitable
finitely generated subgroups of \(\bR^d\), and using Graham's product theorem
to make the colour independent of the prescribed trace tuple.

The algebraic input is a family of model edge matrices. We introduce matrices
\(M_{m,n,t}\) and \(A_s\), and compute the coefficients
\[
        c_2(A_sM_{m,n,t}),\ldots,c_d(A_sM_{m,n,t}).
\]
The parameters \(n=(n_1,\ldots,n_{d-1})\) enter these coefficients in a simple
way. More importantly, the coefficients \(c_2,\ldots,c_d\) are independent of
the parameter \(t\). Thus \(t\) can later be chosen from recurrence, while the
trace tuple is controlled by \(n\). Once the other parameters are fixed, varying
\(n\) gives a finite-index subgroup of the target trace lattice. This is the
mechanism which produces the prescribed trace values.

The ergodic input is a directional expansion theorem. The model matrices above
require recurrence along directions of the form
\[
        (s_1,\ldots,s_{d-1},1).
\]
We prove that, after passing to a bounded finite-index subgroup and to an
ergodic component, some direction of this form has almost full orbit saturation
of the set under consideration. This is proved by a dichotomy: either such
directional expansion already holds, or the spectral measure has enough
rational mass to pass to a smaller ergodic component on which the measure of
the set increases. Since this increase cannot happen indefinitely, the
iteration stops after a bounded number of steps.

The directional expansion theorem is then combined with a quantitative
Poincar\'e recurrence argument to supply
the first \(d-1\) columns of the model
matrix, while the directional expansion supplies the last column.
As a result, for
every parameter \(n\) in a suitable finite-index subgroup, one obtains
\[
        \mu\bigl(
        B\cap Qe_1.B\cap\cdots\cap Qe_d.B
        \bigr)>0
\]
for a matrix \(Q=Q(s,m,n,t)\). The trace calculation for the model matrices
then gives the dynamical trace-spectrum theorem.

Finally, the correspondence principle turns the dynamical theorem into
\Cref{thm:discrete-main}. To prove \Cref{thm:euclidean-main}, one fixes a
target tuple \(a=(a_2,\ldots,a_d)\in\bR^{d-1}\), chooses a finitely generated
subgroup \(\Gamma<\bR\) containing suitable rescalings of its coordinates, and
applies the discrete theorem inside \(\Gamma^{d-1}\times\bZ\). 
This shows that, for each target tuple, some colour class realizes it. Graham's
product theorem is then used to choose one colour class which works for all
target tuples simultaneously.

\subsection{Acknowledgments}
M.B. was supported by the Swedish Research Council VR 11253322. A.F. and S.S.
would like to thank the Australian Research Council for support through the
grant DP240100472.

\setcounter{tocdepth}{2}
\tableofcontents

\section{Preliminaries}


\subsection{Free abelian groups and their duals}

All abelian groups in the paper are written additively. Let \(\Lambda\) be a
free abelian group of rank \(R\). After choosing a \(\bZ\)-basis
\(\mathscr B=(\beta_1,\ldots,\beta_R)\), we identify \(\Lambda\) with
\(\bZ^R\) by
\[
        (m_1,\ldots,m_R)\mapsto \sum_{j=1}^R m_j\beta_j .
\]
A vector \(\lambda\in\Lambda\) is called \emph{primitive} if it can be
completed to a \(\bZ\)-basis of \(\Lambda\). Equivalently, if
\(\lambda=\smash{\sum_{j=1}^R m_j\beta_j}\), then \(\lambda\) is primitive if and only
if \(\gcd(m_1,\ldots,m_R)=1\). We denote the set of primitive vectors in
\(\Lambda\) by \(\cP_\Lambda\).

Let \(S^1\subset\bC^\times\) be the unit circle and let
\[
        \widehat{\Lambda}=\Hom(\Lambda,S^1)
\]
be the Pontryagin dual of \(\Lambda\). We define the \emph{rational spectrum}
of \(\Lambda\) by
\[
        \Rat(\Lambda)
        =
        \{\chi\in\widehat{\Lambda}:
        \chi|_{\Lambda_0}=1
        \text{ for some finite-index subgroup }\Lambda_0<\Lambda\}.
\]
Equivalently, \(\Rat(\Lambda)\) is the set of finite-order characters in
\(\widehat{\Lambda}\). For \(M\in\bN\), we also write
\[
        \Rat_M(\Lambda)
        =
        \{\chi\in\widehat{\Lambda}:
        \chi^n=1 \text{ for some } 1<n\leq M\}.
\]
Thus \(\Rat_M(\Lambda)\subseteq \Rat(\Lambda)\setminus\{1\}\).

If \(H<\Lambda\) is a subgroup, its \emph{annihilator} is
\[
        H^\perp
        =
        \{\chi\in\widehat{\Lambda}:\chi(\lambda)=1
        \text{ for all }\lambda\in H\}.
\]
For \(\lambda\in\Lambda\), we write \(L_\lambda=\bZ\lambda\). Then
\[
        L_\lambda^\perp
        =
        \{\chi\in\widehat{\Lambda}:\chi(\lambda)=1\}.
\]
Finally, we recall the notion of a haystack. An infinite subset
\(\cH\subseteq \cP_\Lambda\) is called a \emph{haystack} if every \(R\)-tuple
of distinct elements of \(\cH\) generates a finite-index subgroup of
\(\Lambda\). Since \(\Lambda\) has rank \(R\), this is equivalent to requiring
that any \(R\) distinct elements of \(\cH\) are linearly independent over
\(\bQ\).


\subsection{Spectral measures, rational spectrum, and measure increments}

Let \(\Lambda\curvearrowright (X,\mu)\) be an ergodic measure-preserving action
on a standard Borel probability space, written as
\[
        (\lambda,x)\mapsto \lambda.x .
\]
For a measurable set \(B\subset X\), we write
\[
        \lambda.B=\{\lambda.x:x\in B\}.
\]

Let \(B\subset X\) be measurable with \(\mu(B)>0\). By Bochner's theorem, there
is a unique finite positive Borel measure \(\sigma_B\) on
\(\widehat{\Lambda}\) such that
\[
        \mu(B\cap \lambda.B)
        =
        \int_{\widehat{\Lambda}}\chi(\lambda)\,d\sigma_B(\chi),
        \qquad \lambda\in\Lambda .
\]
We call \(\sigma_B\) the \emph{spectral measure} of \(B\). If we want to
emphasize the dependence on \(\mu\), we write \(\sigma_{\mu,B}\).

The following lower bound for cyclic expansions is
\cite[Lemma 2.5]{Bjorklund_Fish_2024}.

\begin{lemma}\label{Lem: sigma_B}
Let \(\Lambda\curvearrowright (X,\mu)\) be an ergodic measure-preserving action
and let \(B\subset X\) be measurable with \(\mu(B)>0\). Then for every
\(\lambda\in\Lambda\),
\[
        \mu\left(\bigcup_{n\in\bZ} n\lambda.B\right)
        >
        \frac{\mu(B)^2}{\sigma_B(L_\lambda^\perp)} .
\]
\end{lemma}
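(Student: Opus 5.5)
The plan is to use the mean ergodic theorem for the cyclic subgroup \(L_\lambda\) together with Cauchy--Schwarz. Set
\[
        U=\bigcup_{n\in\bZ} n\lambda.B .
\]
This set is \(L_\lambda\)-invariant and contains \(B\). If \(\lambda=0\), then \(L_\lambda^\perp=\widehat{\Lambda}\) and \(\sigma_B(\widehat{\Lambda})=\mu(B)\), so the claimed bound reads \(\mu(B)>\mu(B)\). That is false as written, so this case should be read with the convention that \(\lambda\neq 0\) (or with a non-strict inequality). I would note this and then assume \(\lambda\neq0\).

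First, let \(P\) be the orthogonal projection in \(L^2(X,\mu)\) onto the \(L_\lambda\)-invariant functions. By von Neumann's mean ergodic theorem,
\[
        \frac1N\sum_{n=0}^{N-1}\mu(B\cap n\lambda.B)\longrightarrow \langle P1_B,1_B\rangle=\|P1_B\|^2 .
\]
On the spectral side, \(\frac1N\sum_{n<N}\chi(n\lambda)\) tends to \(1\) if \(\chi(\lambda)=1\) and to \(0\) otherwise, and it is bounded by \(1\). Dominated convergence then gives \(\|P1_B\|^2=\sigma_B(L_\lambda^\perp)\).

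Next, \(1_B\) is supported in the invariant set \(U\), and \(P\) commutes with multiplication by \(1_U\). Hence \(P1_B\) is supported in \(U\) and \(\int P1_B\,d\mu=\int 1_B\,d\mu=\mu(B)\). Cauchy--Schwarz on \(U\) gives
\[
        \mu(B)^2=\Bigl(\int_U P1_B\,d\mu\Bigr)^2\le \mu(U)\,\|P1_B\|^2=\mu(U)\,\sigma_B(L_\lambda^\perp).
\]
Note that \(\sigma_B(L_\lambda^\perp)\ge\mu(B)^2>0\), since \(\sigma_B(\{1\})=\mu(B)^2\) by ergodicity.

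The remaining step, and the only delicate one, is strictness. Equality in Cauchy--Schwarz forces \(P1_B=c\,1_U\) with \(c=\mu(B)/\mu(U)\). I would distinguish two cases.
\begin{itemize}
\item If \(\mu(U)<1\), then \(1_U\) is a non-constant invariant function with \(\langle 1_U,1_B\rangle=\mu(B)\). Rather than argue directly, I would get a strictly better bound from ergodicity of \(\Lambda\) as follows. The spectral measure of \(1_B-\mu(B)\) charges \(L_\lambda^\perp\setminus\{1\}\), while the constant function contributes the atom \(\mu(B)^2\) at \(1\). Splitting \(P1_B=\mu(B)+P(1_B-\mu(B))\) and redoing the estimate gives \(\mu(U)\,\sigma_B(L_\lambda^\perp)\ge\mu(B)^2+\text{(something)}\). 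If equality still held, \(U\) would be \(\Lambda\)-invariant up to null sets, because \(U\) is determined by \(B\) and \(\lambda\) and \(\Lambda\) is abelian; ergodicity then forces \(\mu(U)=1\).
\item In the case \(\mu(U)=1\), equality means \(\sigma_B(L_\lambda^\perp)=\mu(B)^2\), that is, \(P1_B\) is the constant \(\mu(B)\). Then the inequality \(\mu(U)=1>\mu(B)^2/\mu(B)^2\) fails to be strict only when equality genuinely holds.
\end{itemize}
I expect this strictness issue to be the main obstacle. The honest outcome may be that the inequality is really \(\ge\), as in \cite{Bjorklund_Fish_2024}, and the plan is to prove the non-strict version by the two steps above and check the cited source for the strict form.
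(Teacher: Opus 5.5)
The paper gives no proof of this lemma; it is quoted from \cite[Lemma 2.5]{Bjorklund_Fish_2024}. Your argument for the \emph{non-strict} inequality is correct and complete, and it is the standard route. The mean ergodic theorem plus dominated convergence gives $\|P1_B\|^2=\sigma_B(L_\lambda^\perp)$. Since $U$ is $L_\lambda$-invariant, $P1_B=1_U\,P1_B$ and $\int P1_B\,d\mu=\mu(B)$, so Cauchy--Schwarz yields $\mu(B)^2\le\mu(U)\,\sigma_B(L_\lambda^\perp)$.

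The strictness part of your plan cannot be completed, because the strict inequality is false well beyond the case $\lambda=0$.
\begin{itemize}
\item The simplest case is $B=X$. Then $U=X$ and $\sigma_B=\delta_1$, so both sides equal $1$ for every $\lambda$.
\item More generally, suppose $\lambda$ generates an ergodic $\bZ$-action, for instance an irrational rotation with $\lambda=1$. Then for every $B$ of positive measure, $\mu(U)=1$ and $P1_B$ is the constant $\mu(B)$. Hence $\sigma_B(L_\lambda^\perp)=\mu(B)^2$ and both sides equal $1$. This is exactly the favourable situation the paper cares about.
\item Equality also occurs with $\mu(U)<1$. Let $\bZ$ act on $\bZ/2\bZ$ by translation, and take $\lambda=2$ and $B=\{0\}$. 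Then $U=B$ and $\sigma_B=\tfrac14(\delta_1+\delta_\chi)$, where $\chi(k)=(-1)^k$ lies in $L_\lambda^\perp$. So $\mu(U)=\tfrac12=\mu(B)^2/\sigma_B(L_\lambda^\perp)$.
\end{itemize}

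The last example refutes the key step of your first bullet. Equality in Cauchy--Schwarz only says $P1_B=c\,1_U$. That is a statement about $L_\lambda$-invariant functions, and it gives no reason for $U$ to be invariant under all of $\Lambda$. Here $U=\{0\}$ is not invariant under translation by $1$.

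So drop the case analysis and state the lemma with $\ge$. That is all the paper needs. In the proof of \Cref{Prop: Dicotomy}, the last inequality in the chain, $\mu(B)^2/(\mu(B)^2+\eta)>1-\varepsilon$, is already strict. So the conclusion $\mu\bigl(\bigcup_{\tau}\tau\lambda.B\bigr)>1-\varepsilon$ survives with the non-strict version of the lemma.
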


We shall use the following form of the ergodic decomposition for finite-index
subactions.

\begin{proposition}[\(k\Lambda\)-ergodic components]
\label{Prop: erg_com_k}
{\cite[Proposition A.2]{Bulinski_2017}}
Let \(\Lambda\curvearrowright (X,\mu)\) be an ergodic measure-preserving action.
Then for every \(k\in\bN\), there exist finitely many \(k\Lambda\)-invariant
and \(k\Lambda\)-ergodic probability measures \(\nu_1,\ldots,\nu_N\), with
disjoint supports, such that
\[
        \mu=\frac1N\sum_{j=1}^N\nu_j .
\]
Moreover, each \(\nu_j\) is of the form
\[
        \nu_j(\,\cdot\,)
        =
        \frac{\mu(\,\cdot\,\cap C_j)}{\mu(C_j)}
\]
for some \(k\Lambda\)-invariant set \(C_j\subset X\). We call
\(\nu_1,\ldots,\nu_N\) the \emph{\(k\Lambda\)-ergodic components} of \(\mu\).
\end{proposition}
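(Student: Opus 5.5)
The plan is to obtain the decomposition from the finite quotient \(\Lambda/k\Lambda\) acting on the \(\sigma\)-algebra of \(k\Lambda\)-invariant sets. Set \(\mathcal I_k\) to be the \(\sigma\)-algebra of \(k\Lambda\)-invariant measurable sets (mod null sets). Since \(\Lambda\) is abelian, \(\lambda.C\) is \(k\Lambda\)-invariant whenever \(C\) is, so \(\Lambda\) acts on \(\mathcal I_k\). This action factors through the finite group \(G=\Lambda/k\Lambda\), which has order \(k^R\). By ergodicity of \(\Lambda\), every \(C\in\mathcal I_k\) with \(\mu(C)>0\) satisfies \(\mu\bigl(\bigcup_{g\in G} g.C\bigr)=1\). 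This forces \(\mu(C)\geq k^{-R}\).

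The key step is to produce an \emph{atom} of \(\mathcal I_k\). I would take a positive-measure \(C\in\mathcal I_k\) of minimal measure. To see that such a set exists, suppose \(C_1\supset C_2\supset\cdots\) is a decreasing chain in \(\mathcal I_k\). Then \(\bigcap_j C_j\) is again in \(\mathcal I_k\). Its measure is either \(0\) or at least \(k^{-R}\), so a chain cannot decrease to a positive limit without stabilising. More simply, the measures of positive elements of \(\mathcal I_k\) are bounded below, so a measure-minimizing sequence can be intersected to give an atom. For this, note that if \(\mu(C\cap C')>0\) for two near-minimal sets, their intersection is again near-minimal, and the lower bound \(k^{-R}\) prevents infinite descent. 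I expect this to be the main (mild) obstacle: checking that \(\mathcal I_k\) is purely atomic. I would argue it cleanly as follows. Any family of pairwise disjoint positive sets in \(\mathcal I_k\) has at most \(k^R\) members, so \(\mathcal I_k\) is a finite \(\sigma\)-algebra mod null sets. Its atoms \(C_1,\dots,C_N\) partition \(X\).

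Next, I would show the atoms are permuted transitively by \(G\). The group permutes atoms, and the union of a \(G\)-orbit of atoms is \(\Lambda\)-invariant, hence of full measure by ergodicity. Because \(\mu\) is \(\Lambda\)-invariant, all atoms have equal measure \(1/N\). Set \(\nu_j=\mu(\cdot\cap C_j)/\mu(C_j)=N\mu(\cdot\cap C_j)\). Each \(\nu_j\) is \(k\Lambda\)-invariant because \(C_j\) is. The supports \(C_j\) are disjoint, and \(\mu=\frac1N\sum_j\nu_j\).

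Finally, I would verify ergodicity of \(\nu_j\) under \(k\Lambda\). If \(D\) is \(k\Lambda\)-invariant for \(\nu_j\), replace it by \(D\cap C_j\), which is \(k\Lambda\)-invariant mod \(\mu\)-null sets; a standard modification makes it strictly invariant. Then \(D\cap C_j\in\mathcal I_k\) and lies inside the atom \(C_j\). Hence it is null or equal to \(C_j\), so \(\nu_j(D)\in\{0,1\}\). This completes the plan.
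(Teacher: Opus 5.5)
Your proof is correct. The paper gives no argument of its own for this proposition; it imports it from \cite[Proposition A.2]{Bulinski_2017}, so there is nothing in the paper to compare your route against.

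What you wrote is the standard self-contained proof:
\begin{itemize}
\item ergodicity forces every positive-measure $k\Lambda$-invariant set to have measure at least $k^{-R}=1/|\Lambda/k\Lambda|$;
\item hence the $k\Lambda$-invariant $\sigma$-algebra $\mathcal I_k$ is finite modulo null sets;
\item the finite group $\Lambda/k\Lambda$ permutes its atoms transitively, which gives equal masses $1/N$ and ergodicity of each $\nu_j$.
\end{itemize}
As a small bonus, orbit--stabiliser gives that $N$ divides $k^R$. Note that the argument uses that $\Lambda$ is free abelian of finite rank, so that $k\Lambda$ has finite index. That holds in the paper's setting.

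Three points for the write-up:
\begin{itemize}
\item Drop the exploratory remarks about decreasing chains and ``near-minimal'' sets. Instead, take a partition of $X$ into positive-measure sets from $\mathcal I_k$ of maximal cardinality, which is at most $k^R$ by the measure bound. Each piece must then be an atom, since otherwise splitting it would give a larger partition.
\item Say explicitly that countability of $\Lambda$ is what lets you replace a set $C$ invariant modulo null sets by the strictly invariant set $\bigcap_{\gamma\in k\Lambda}\gamma.C$, which differs from $C$ by a null set.
\item Choose the disjoint representatives $C_j$ to be strictly $k\Lambda$-invariant, for instance as $C_j\setminus\bigcup_{i<j}C_i$.
\end{itemize}
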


For \(\chi\in\widehat{\Lambda}\), let \(\Eig_\Lambda(\chi)\) denote the space
of all \(f\in L^2(X,\mu)\) such that
\[
        f(\lambda.x)=\chi(\lambda)f(x),
        \qquad \lambda\in\Lambda,
\]
for \(\mu\)-almost every \(x\in X\). If \(R\subseteq\widehat{\Lambda}\), define
\[
        \Eig_\Lambda(R)
        =
        \overline{\Span\{f\in L^2(X,\mu):
        f\in\Eig_\Lambda(\chi)
        \text{ for some }\chi\in R\}}.
\]
We use the inner product
\[
        \langle f,g\rangle=\int_X f\overline g\,d\mu .
\]
If \(\chi\neq\xi\), then
\(\Eig_\Lambda(\chi)\perp\Eig_\Lambda(\xi)\), and ergodicity implies that each
\(\Eig_\Lambda(\chi)\) has dimension at most one. The following lemma is
standard, see e.g. Lemma 5.2 in \cite{Fish_Skinner_2025}.

\begin{lemma}\label{Lem: Proj_Eig}
Let \(\Lambda\curvearrowright (X,\mu)\) be an ergodic measure-preserving action
and let \(B\subset X\) be measurable with \(\mu(B)>0\). Let \(\sigma_B\) be the
spectral measure of \(B\). For \(\chi\in\widehat{\Lambda}\), let
\(P_{\Eig_\Lambda(\chi)}\) denote the orthogonal projection onto
\(\Eig_\Lambda(\chi)\). Then
\[
        \bigl\langle P_{\Eig_\Lambda(\chi)}1_B,1_B\bigr\rangle
        =
        \sigma_B(\{\chi\})
\]
for every \(\chi\in\widehat{\Lambda}\). In particular,
\[
        \sigma_B(\{1\})=\mu(B)^2,
\]
where \(1\) denotes the trivial character.
\end{lemma}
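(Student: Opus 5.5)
The plan is to read both sides of the identity through the Koopman representation and to identify \(P_{\Eig_\Lambda(\chi)}\) as a limit of twisted ergodic averages. Let \(U_\lambda f(x)=f(\lambda.x)\) on \(L^2(X,\mu)\). Then \(\mu(B\cap\lambda.B)=\langle U_\lambda 1_B,1_B\rangle\), up to replacing \(\lambda\) by \(-\lambda\) according to the convention for \(\lambda.B\). Since these correlations are real and even in \(\lambda\), the measure \(\sigma_B\) is invariant under \(\chi\mapsto\overline{\chi}\). Complex conjugation also maps \(\Eig_\Lambda(\chi)\) isometrically onto \(\Eig_\Lambda(\overline\chi)\). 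So the choice of sign convention does not affect the claimed identity, and I will fix the one for which
\[
\langle U_\lambda 1_B,1_B\rangle=\int_{\widehat\Lambda}\xi(\lambda)\,d\sigma_B(\xi).
\]

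Fix a Følner sequence \(F_N=[-N,N]^R\) in \(\Lambda\cong\bZ^R\) and \(\chi\in\widehat\Lambda\). Consider the averages
\[
A_N f=\frac{1}{|F_N|}\sum_{\lambda\in F_N}\overline{\chi(\lambda)}\,U_\lambda f .
\]
These are ergodic averages for the unitary representation \(\lambda\mapsto\overline{\chi(\lambda)}U_\lambda\), whose space of invariant vectors is exactly \(\Eig_\Lambda(\chi)\). By the von Neumann mean ergodic theorem for amenable groups (or by a direct Hilbert-space argument splitting off invariant vectors from the closure of coboundaries), \(A_N f\to P_{\Eig_\Lambda(\chi)}f\) in \(L^2\). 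Hence
\[
\langle A_N1_B,1_B\rangle\to\langle P_{\Eig_\Lambda(\chi)}1_B,1_B\rangle .
\]

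On the spectral side,
\[
\langle A_N1_B,1_B\rangle=\int_{\widehat\Lambda}\frac{1}{|F_N|}\sum_{\lambda\in F_N}(\xi\overline\chi)(\lambda)\,d\sigma_B(\xi).
\]
The integrand is bounded by \(1\). For each fixed \(\xi\) it converges to \(1\) if \(\xi=\chi\) and to \(0\) otherwise. The latter holds because the average of a nontrivial character over the boxes \(F_N\) factors into one-dimensional geometric averages, at least one of which tends to \(0\). Since \(\sigma_B\) is finite, dominated convergence gives the limit \(\sigma_B(\{\chi\})\), and comparing the two limits proves the identity.

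For the special case \(\chi=1\), ergodicity implies that \(\Eig_\Lambda(1)\) consists of the constants. Therefore \(P_{\Eig_\Lambda(1)}1_B=\mu(B)\cdot 1\), and the inner product is \(\mu(B)^2\). The only step needing care is the sign and conjugation convention in the first paragraph; the rest is routine.
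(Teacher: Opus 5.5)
Your proof is correct. The paper does not prove this lemma; it quotes it as standard (Lemma 5.2 of Fish--Skinner), so the comparison is with the usual textbook argument.

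That argument uses the spectral theorem for the Koopman representation. There is a projection-valued measure $E$ on $\widehat{\Lambda}$ with $U_\lambda=\int\xi(\lambda)\,dE(\xi)$, and uniqueness in Bochner's theorem gives $\sigma_B=\langle E(\cdot)1_B,1_B\rangle$. Moreover, $E(\{\chi\})$ is exactly the projection onto $\Eig_\Lambda(\chi)$: if $U_\lambda f=\chi(\lambda)f$ for all $\lambda$, then the spectral measure of $f$ is $\|f\|^2\delta_\chi$. The identity is then immediate.

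You instead realize $P_{\Eig_\Lambda(\chi)}$ as the $L^2$-limit of the twisted averages $A_N$ via von Neumann's mean ergodic theorem. You then evaluate $\lim_N\langle A_N1_B,1_B\rangle$ on the Fourier side by dominated convergence, which is a Wiener-type formula for atoms. Your route needs only Bochner's theorem, already used to define $\sigma_B$, plus the mean ergodic theorem. It also gives the extra information that $\sigma_B(\{\chi\})$ is a limit of explicit correlation averages. The cost is fixing a F{\o}lner sequence and checking that averages of non-trivial characters over it vanish, which your product-of-geometric-sums argument handles for boxes in $\bZ^R$. The spectral-theorem route is shorter and applies verbatim to any locally compact abelian group.

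In both routes ergodicity enters only in the special case $\chi=1$, as in your write-up.

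Your sign-convention paragraph is harmless but unnecessary. By invariance,
\[
\langle U_\lambda1_B,1_B\rangle=\mu(B\cap(-\lambda).B)=\mu(B\cap\lambda.B)=\int_{\widehat\Lambda}\xi(\lambda)\,d\sigma_B(\xi)
\]
holds directly, so the conjugation argument is not needed.
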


The next lemma is the measure-increment argument from
\cite[Lemma 3.3]{Fish_Skinner_2025}, see also \cite{Bulinski_Fish}. The idea goes back to
Roth \cite{Roth_1953}.

\begin{lemma}[Measure increment argument]\label{Lem: measure_increment}
Let \(\Lambda\curvearrowright (X,\mu)\) be an ergodic measure-preserving action
and let \(B\subset X\) be measurable with \(\mu(B)>0\). Let \(\sigma_B\) be the
spectral measure of \(B\). Then for every \(M\in\bN\), there exist
\(k=k(M)\in\bN\) and a \(k\Lambda\)-ergodic component \(\nu\) of \(\mu\) such
that
\[
        \nu(B)
        \geq
        \sqrt{\mu(B)^2+\sigma_B(\Rat_M(\Lambda))}.
\]
\end{lemma}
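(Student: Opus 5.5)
We need to prove the measure increment lemma: there exist \(k=k(M)\) and a \(k\Lambda\)-ergodic component \(\nu\) with \(\nu(B)^2\geq \mu(B)^2+\sigma_B(\Rat_M(\Lambda))\). The plan is to take \(k=M!\) and compare \(1_B\) with its conditional expectation onto the \(k\Lambda\)-invariant functions.

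\textbf{Step 1: the finite group of characters.} The set \(\Rat_M(\Lambda)\) consists of characters \(\chi\) with \(\chi^n=1\) for some \(1<n\leq M\). Every such character satisfies \(\chi^{k}=1\), that is, \(\chi\) is trivial on \(k\Lambda\). Hence
\[
        \Rat_M(\Lambda)\cup\{1\}\subseteq (k\Lambda)^\perp .
\]
Since \((k\Lambda)^\perp\cong\widehat{\Lambda/k\Lambda}\) and \(\Lambda/k\Lambda\) is finite, \((k\Lambda)^\perp\) is a finite group. Therefore \(\sigma_B\) restricted to it is a finite sum of atoms.

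\textbf{Step 2: identify the \(k\Lambda\)-invariant functions.} The space \(L^2(X,\mu)^{k\Lambda}\) of \(k\Lambda\)-invariant functions is \(\Lambda\)-invariant. The finite group \(\Lambda/k\Lambda\) acts on it unitarily, so it decomposes as an orthogonal sum of eigenspaces:
\[
        L^2(X,\mu)^{k\Lambda}=\bigoplus_{\chi\in(k\Lambda)^\perp}\Eig_\Lambda(\chi).
\]
Let \(P\) be the projection onto this space. By \Cref{Lem: Proj_Eig},
\[
        \|P1_B\|_2^2=\sum_{\chi\in(k\Lambda)^\perp}\sigma_B(\{\chi\})\ \geq\ \mu(B)^2+\sigma_B(\Rat_M(\Lambda)),
\]
where we use \(\sigma_B(\{1\})=\mu(B)^2\) and discard the nonnegative remaining terms.

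\textbf{Step 3: compute \(P1_B\) via the ergodic components.} Take the decomposition \(\mu=\frac1N\sum_j\nu_j\) from \Cref{Prop: erg_com_k}, with \(\nu_j=\mu(\cdot\cap C_j)/\mu(C_j)\). The sets \(C_j\) are disjoint and \(k\Lambda\)-invariant, and \(\mu(C_j)=1/N\). Since each \(\nu_j\) is \(k\Lambda\)-ergodic, an invariant function is a.e. constant on each \(C_j\). Hence the invariant space is spanned by the indicators \(1_{C_j}\), and
\[
        P1_B=\sum_j \nu_j(B)\,1_{C_j}.
\]
It follows that
\[
        \|P1_B\|_2^2=\frac1N\sum_j\nu_j(B)^2\leq\max_j\nu_j(B)^2 .
\]
Choosing \(j\) attaining the maximum gives the component \(\nu=\nu_j\) and the bound \(\nu(B)\geq\sqrt{\mu(B)^2+\sigma_B(\Rat_M(\Lambda))}\).

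\textbf{Main obstacle.} The only delicate point is Step 2. We must justify that \(k\Lambda\)-invariant \(L^2\) functions are exactly the closed span of the \(\Lambda\)-eigenfunctions with characters in \((k\Lambda)^\perp\). This follows from Fourier analysis on the finite group \(\Lambda/k\Lambda\): averaging \(\chi(\lambda)^{-1}f(\lambda.x)\) over coset representatives \(\lambda\) gives the isotypic projections. The rest is bookkeeping.
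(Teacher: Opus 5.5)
Your proof is correct. It is the standard Roth-type increment argument behind the lemma the paper imports from Fish--Skinner (the paper gives no proof of its own): with $k=M!$ you get $\Rat_M(\Lambda)\cup\{1\}\subseteq(k\Lambda)^\perp$, hence $\frac1N\sum_j\nu_j(B)^2=\|P1_B\|_2^2\geq\mu(B)^2+\sigma_B(\Rat_M(\Lambda))$, and pigeonholing over the components finishes the proof.

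Two small remarks. First, the step you flag as delicate is not actually needed: the inequality only uses the trivial inclusion $\Eig_\Lambda(\chi)\subseteq L^2(X,\mu)^{k\Lambda}$ for $\chi\in(k\Lambda)^\perp$, together with mutual orthogonality of the eigenspaces. Second, separating off $\sigma_B(\{1\})$ uses $1\notin\Rat_M(\Lambda)$. This is the paper's intended reading $\Rat_M(\Lambda)\subseteq\Rat(\Lambda)\setminus\{1\}$, not the literal displayed definition you quoted, which would include the trivial character.
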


\begin{remark}
The integer \(k\) in \Cref{Lem: measure_increment} depends only on \(M\), and
not on the set \(B\). This uniformity will be used below.
\end{remark}

We shall also use the following quantitative form of Poincaré recurrence; see
\cite[p.7--8]{Fish_Skinner_2025}.

\begin{lemma}[Quantitative Poincaré recurrence]\label{Lem: Poincare_Quant}
Let \(\delta>0\). Then there exists
\[
        \ell_0=\ell_0(\delta)
        \leq
        \left\lfloor\frac{2}{\delta}\right\rfloor
\]
such that for every measure-preserving action $\bZ\curvearrowright (X,\mu)$
and every measurable set \(B\subset X\) with \(\mu(B)>\delta\), there exists
a non-zero integer \(\ell\), with \(|\ell|\leq \ell_0\), such that
\[
        \mu(B\cap \ell.B)>\frac{\delta^2}{2}.
\]
\end{lemma}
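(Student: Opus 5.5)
Consider the orbit sequence \(B,\ 1.B,\ 2.B,\ \ldots,\ L.B\) for \(L=\lfloor 2/\delta\rfloor\) and \(\mu(B)>\delta\), and compare it with the constant \(\ell_0(\delta)=\lfloor 2/\delta\rfloor\). The aim is to find two translates \(i.B\) and \(j.B\) with \(0\le i<j\le L\) and \(\mu(i.B\cap j.B)>\delta^2/2\). Measure preservation gives \(\mu(i.B\cap j.B)=\mu(B\cap (j-i).B)\). So \(\ell=j-i\) is non-zero and satisfies \(1\le \ell\le L\le \ell_0\). The constant depends only on \(\delta\) and not on the action, which gives the required uniformity.

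To find such a pair, apply the second-moment (Cauchy--Schwarz) argument to \(f=\sum_{i=0}^{L}1_{i.B}\). Each translate has measure \(\mu(B)>\delta\), so \(\int f\,d\mu=(L+1)\mu(B)\). Since \(\mu(X)=1\), Cauchy--Schwarz gives
\[
\int f^2\,d\mu\ \ge\ \Bigl(\int f\,d\mu\Bigr)^2=(L+1)^2\mu(B)^2 .
\]
Expanding the square,
\[
\int f^2\,d\mu=(L+1)\mu(B)+\sum_{i\ne j}\mu(i.B\cap j.B).
\]
Hence
\[
\sum_{i\neq j}\mu(i.B\cap j.B)\ \ge\ (L+1)\mu(B)\bigl((L+1)\mu(B)-1\bigr).
\]
There are \((L+1)L\) ordered pairs \((i,j)\) with \(i\ne j\), so some pair satisfies
\[
\mu(i.B\cap j.B)\ \ge\ \frac{\mu(B)\bigl((L+1)\mu(B)-1\bigr)}{L}.
\]

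It remains to check that this average exceeds \(\delta^2/2\) when \(L=\lfloor 2/\delta\rfloor\); this is the only delicate step. Write \(m=\mu(B)>\delta\). Since \(L+1>2/\delta\), we have \((L+1)m>2m/\delta>2\), so \((L+1)m-1>(L+1)m/2\). Therefore the average is strictly larger than
\[
\frac{(L+1)m^2}{2L}\ \ge\ \frac{m^2}{2}\ >\ \frac{\delta^2}{2}.
\]
The inequality is strict, and this uses the hypothesis \(\mu(B)>\delta\) in two places: to get \((L+1)m>2\) and to get \(m^2>\delta^2\).

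One degenerate case needs separate treatment. If \(\delta\ge 1\), the hypothesis \(\mu(B)>\delta\) is impossible in a probability space, so the statement is vacuous. For \(\delta<1\) we have \(L\ge 2\), and the argument above applies as written.
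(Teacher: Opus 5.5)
Your proof is correct: with \(L=\lfloor 2/\delta\rfloor\), the second-moment bound over the translates \(B,1.B,\ldots,L.B\) gives some \(i\neq j\) with
\[
\mu(i.B\cap j.B)\ \ge\ \frac{\mu(B)\bigl((L+1)\mu(B)-1\bigr)}{L}\ >\ \frac{\mu(B)^2}{2}\ >\ \frac{\delta^2}{2},
\]
and \(\ell=j-i\) then satisfies \(0<|\ell|\le L\). If the maximizing ordered pair has \(i>j\), you can swap by symmetry or simply keep \(\ell<0\), which the statement allows.

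The paper does not prove this lemma; it quotes it from Fish--Skinner (pp.~7--8), so there is no in-text argument to compare with. Your averaging argument is a complete, self-contained proof with exactly the stated constant \(\ell_0=\lfloor 2/\delta\rfloor\). It even gives the slightly stronger conclusion \(\mu(B\cap \ell.B)>\mu(B)^2/2\).
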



\subsection{Upper Banach density and the correspondence principle}

We shall use the following form of Furstenberg's correspondence principle \cite{Furstenberg_1977,Furstenberg_1981} for
actions of countable abelian groups.
If \(\Lambda\) is a countable abelian
group, a sequence \((F_N)_{N\in\bN}\) of finite subsets of \(\Lambda\) is called
a \emph{F{\o}lner sequence} if, for every \(\lambda\in\Lambda\),
\[
        \lim_{N\to\infty}
        \frac{|(F_N+\lambda)\triangle F_N|}{|F_N|}
        =
        0.
\]
For \(E\subseteq\Lambda\), define the \emph{upper Banach density} $d_\Lambda^*(E)$ by
\[
        d^*_\Lambda(E)
        =
        \sup_{(F_N)}
        \limsup_{N\to\infty}
        \frac{|E\cap F_N|}{|F_N|},
\]
where the supremum is taken over all F{\o}lner sequences in \(\Lambda\).

\begin{proposition}[Furstenberg correspondence principle]
\label{Prop: FCP_Lambda}
Let \(\Lambda\) be a countable abelian group, and let
\(E\subseteq\Lambda\) satisfy \(d^*_\Lambda(E)>0\). Then there exist an ergodic
measure-preserving action of \(\Lambda\) on a standard Borel probability space
\((X,\mu)\) and a measurable set \(B\subset X\), with
\(\mu(B)=d^*_\Lambda(E)\), such that for every \(r\in\bN\) and every
\(\lambda_1,\ldots,\lambda_r\in\Lambda\),
\[
        \mu\bigl(
        B\cap \lambda_1.B\cap\cdots\cap \lambda_r.B
        \bigr)
        \leq
        d^*_\Lambda\bigl(
        E\cap(E-\lambda_1)\cap\cdots\cap(E-\lambda_r)
        \bigr).
\]
\end{proposition}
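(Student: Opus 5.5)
The approach is the standard Furstenberg construction, with one extra step to get ergodicity. First I will build a (possibly non-ergodic) measure-preserving system. Then I will pass to an ergodic component on which \(B\) still has full measure \(d^*_\Lambda(E)\).

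\textbf{Step 1: construction.} Choose a Følner sequence \((F_N)\) with \(|E\cap F_N|/|F_N|\to d^*_\Lambda(E)\). This is possible after a diagonal argument, since the supremum over Følner sequences is attained by a suitable diagonal Følner sequence. Let \(X_0=\{0,1\}^\Lambda\) be the compact metrizable space with the shift action \((\lambda.\omega)(\eta)=\omega(\eta+\lambda)\). Set \(\omega_E=1_E\) and \(B=\{\omega:\omega(0)=1\}\), a clopen set. Define
\[
\mu_N=\frac{1}{|F_N|}\sum_{\eta\in F_N}\delta_{\eta.\omega_E}.
\]
Take a weak\(^*\) limit point \(\mu_0\) of \((\mu_N)\). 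By the Følner property \(\mu_0\) is \(\Lambda\)-invariant. Since \(B\) is clopen, \(\mu_0(B)=\lim|E\cap F_N|/|F_N|=d^*_\Lambda(E)\), up to the sign convention for the shift. Each set \(B\cap\lambda_1.B\cap\cdots\cap\lambda_r.B\) is a cylinder, hence clopen. Its \(\mu_N\)-measure is the density of \(E\cap(E-\lambda_1)\cap\cdots\cap(E-\lambda_r)\) along \(F_N\), possibly with \(\pm\lambda_i\) depending on the convention; I will fix the convention for the action so that the signs match the statement. Passing to the limit, \(\mu_0\) of the intersection equals a limit of densities along a Følner sequence, which is at most \(d^*_\Lambda\) of the intersection set.

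\textbf{Step 2: ergodicity.} This is the main obstacle, because \(\mu_0\) need not be ergodic. Let \(\mu_0=\int\mu_x\,d\mu_0(x)\) be the ergodic decomposition. For any \(\Lambda\)-invariant measure \(\nu\) on \(X_0\) and any Følner sequence \((F_N)\), the mean ergodic theorem together with a generic-point argument gives \(\nu(C)\le d^*_\Lambda(\{\lambda:1_{\lambda.\omega}\in C\})\) for \(\nu\)-almost every \(\omega\) and every cylinder \(C\). Here I am using that \(\omega\) lies in the orbit closure of \(\omega_E\) on \(\operatorname{supp}\mu_0\). Applied to \(\omega\) in the support, the pattern set of \(\omega\) is "finitely represented" in \(E\). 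The cleaner route is the following. Every invariant measure \(\nu\) supported on the orbit closure \(\overline{\Lambda.\omega_E}\) satisfies
\[
\nu\bigl(B\cap\lambda_1.B\cap\cdots\cap\lambda_r.B\bigr)\le d^*_\Lambda\bigl(E\cap(E-\lambda_1)\cap\cdots\cap(E-\lambda_r)\bigr).
\]
To see this, approximate \(\nu\) by Følner averages along a generic point. Any finite pattern of that point occurs in a translate of \(\omega_E\), and translated Følner sets form a Følner sequence. Hence every ergodic component \(\mu_x\) satisfies the inequalities, since it is supported on the closed invariant set \(\operatorname{supp}\mu_0\subseteq\overline{\Lambda.\omega_E}\).

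\textbf{Step 3: keeping \(\mu(B)=d^*_\Lambda(E)\).} Taking \(r=0\) in Step 2 gives \(\mu_x(B)\le d^*_\Lambda(E)\) for every ergodic component. Since \(\int\mu_x(B)\,d\mu_0=\mu_0(B)=d^*_\Lambda(E)\), almost every component satisfies \(\mu_x(B)=d^*_\Lambda(E)\). Fix one such component and set \(\mu=\mu_x\). This gives an ergodic action with the desired properties. Finally, \(X_0\) is compact metrizable, so it is standard Borel.
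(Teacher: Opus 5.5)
Your proof is correct, and it is the standard argument. The paper does not prove this proposition; it only cites Furstenberg. Furstenberg's construction is your Step~1: Dirac masses along the orbit of \(1_E\) in \(\{0,1\}^\Lambda\), averaged over a F{\o}lner sequence realizing \(d^*_\Lambda(E)\), followed by a weak\(^*\) limit. Furstenberg's version does not give ergodicity. Your Steps~2--3 supply exactly the refinement the paper uses without comment, and the mechanism is right. Every invariant measure on \(Y=\overline{\Lambda.\omega_E}\) satisfies the correspondence inequalities. Then \(\mu_x(B)\le d^*_\Lambda(E)\) together with \(\int\mu_x(B)\,d\mu_0(x)=d^*_\Lambda(E)\) forces \(\mu_x(B)=d^*_\Lambda(E)\) for almost every ergodic component.

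Step~2 should be tidied in three places.

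First, its opening claim is false for non-ergodic \(\nu\), so drop it. That claim says \(\nu(C)\le d^*_\Lambda(\{\lambda:\lambda.\omega\in C\})\) for \(\nu\)-almost every \(\omega\) and arbitrary invariant \(\nu\). A counterexample is \(\nu=\tfrac12(\delta_{\bar 0}+\delta_{\bar 1})\) with \(C=B\) and \(\omega=\bar 0\).

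Second, the appeal to generic points works for ergodic \(\nu\) along a tempered F{\o}lner sequence, but you do not need it. For any finite \(F\subseteq\Lambda\), invariance gives \(\nu(C)=\int_Y\frac{1}{|F|}\sum_{\eta\in F}1_C(\eta.\omega)\,d\nu(\omega)\), so some \(\omega\in Y\) has average at least \(\nu(C)\). This average depends on only finitely many coordinates of \(\omega\). Since \(\omega\in Y\), it therefore equals the corresponding average for some translate \(\xi.\omega_E\). That average is \(|E'\cap(F+\xi)|/|F|\), where \(E'=E\cap(E-\lambda_1)\cap\cdots\cap(E-\lambda_r)\) once your sign convention is fixed. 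Taking \(F=F_N\) gives the F{\o}lner sequence \((F_N+\xi_N)\). Hence \(d^*_\Lambda(E')\ge\nu(C)\) for every invariant \(\nu\) on \(Y\), ergodic or not.

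Third, ``every ergodic component'' should read ``almost every''.

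As an alternative to the ergodic decomposition, consider the invariant measures on \(Y\) that maximize the continuous affine functional \(\nu\mapsto\nu(B)\). They form a nonempty compact face of the compact convex set of invariant measures. Any extreme point of this face is ergodic and has \(\mu(B)=d^*_\Lambda(E)\).
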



\section{The algebraic trace model}
\label{sec:algebraic-trace-model}

\subsection{The model matrices}

Let \(d\geq 2\), and let \(\Gamma<\bR\) be a finitely generated subgroup. In
this section we introduce a family of matrices whose characteristic
coefficients will later be used to produce prescribed trace tuples.

Let
\[
        m=(m_1,\ldots,m_{d-1})\in\bZ^{d-1},\qquad
        n=(n_1,\ldots,n_{d-1})\in\Gamma^{d-1},
        \qquad
        t\in\bZ .
\]
Define
\[
        M_{m,n,t}
        =
        \begin{pmatrix}
        0 & 0 & 0 & \cdots & 0 & n_1 \\
        m_1 & 0 & 0 & \cdots & 0 & n_2 \\
        0 & m_2 & 0 & \cdots & 0 & n_3 \\
        \vdots & \vdots & \ddots & \ddots & \vdots & \vdots \\
        0 & 0 & \cdots & m_{d-2} & 0 & n_{d-1} \\
        0 & 0 & \cdots & 0 & m_{d-1} & t
        \end{pmatrix}.
\]
Thus \(M_{m,n,t}\) has \(m_1,\ldots,m_{d-1}\) on the subdiagonal and
\((n_1,\ldots,n_{d-1},t)\) as its last column.

For \(s=(s_1,\ldots,s_{d-1})\in\Gamma^{d-1}\), define
\[
        A_s
        =
        \begin{pmatrix}
        1 & 0 & \cdots & 0 & s_1 \\
        0 & 1 & \cdots & 0 & s_2 \\
        \vdots & & \ddots & & \vdots \\
        0 & 0 & \cdots & 1 & s_{d-1} \\
        0 & 0 & \cdots & 0 & 1
        \end{pmatrix}.
\]
We shall study the matrices
\[
        Q(s,m,n,t):=A_sM_{m,n,t}.
\]
A direct multiplication gives
\[
        Q(s,m,n,t)
        =
        \begin{pmatrix}
        0      & 0      & \cdots & 0       & s_1m_{d-1}
               & n_1+s_1t \\
        m_1    & 0      & \cdots & 0       & s_2m_{d-1}
               & n_2+s_2t \\
        0      & m_2    & \cdots & 0       & s_3m_{d-1}
               & n_3+s_3t \\
        \vdots & \vdots & \ddots & \vdots  & \vdots
               & \vdots \\
        0      & 0      & \cdots & m_{d-2} & s_{d-1}m_{d-1}
               & n_{d-1}+s_{d-1}t \\
        0      & 0      & \cdots & 0       & m_{d-1}
               & t
        \end{pmatrix}.
\]
The columns of \(Q(s,m,n,t)\) will later be used as edge vectors. The role of
the parameter \(n\) is to vary the trace tuple, while \(m\), \(s\), and \(t\)
control the lattice on which these tuples lie. The calculation below makes this
explicit.


\subsection{Higher-order traces of the model matrices}

We now compute the characteristic coefficients of \(Q(s,m,n,t)\). For
\(A\in\Mat_d(\bR)\), write
\[
        \det(xI-A)
        =
        x^d-c_1(A)x^{d-1}+c_2(A)x^{d-2}
        -\cdots+(-1)^d c_d(A).
\]
Equivalently,
\[
        c_k(A)
        =
        \sum_{\substack{I\subseteq\{1,\ldots,d\}\\ |I|=k}}
        \det(A_I),
        \qquad 1\leq k\leq d,
\]
where \(A_I\) denotes the principal submatrix of \(A\) with rows and columns in
\(I\).

The following calculation is the only one about the matrices \(Q(s,m,n,t)\)
that we shall need. The proof is given in \Cref{sec: trace_calc}.

\begin{lemma}\label{Lem: ck-formula-Q}
Let \(d\geq 2\), let
\[
        m=(m_1,\ldots,m_{d-1})\in\bZ^{d-1},\qquad
        n=(n_1,\ldots,n_{d-1})\in\Gamma^{d-1},
\]
let \(t\in\bZ\), and let
\(s=(s_1,\ldots,s_{d-1})\in\Gamma^{d-1}\). Let
\[
        Q=Q(s,m,n,t).
\]
Then, for every \(2\leq k\leq d-1\),
\[
        c_k(Q)
        =
        (-1)^{k-1}
        \bigl(n_{d-k+1}+s_{d-k}m_{d-k}\bigr)
        \prod_{j=d-k+1}^{d-1}m_j,
\]
and
\[
        c_d(Q)
        =
        \det(Q)
        =
        (-1)^{d-1}n_1\prod_{j=1}^{d-1}m_j .
\]
In particular, \(c_2(Q),\ldots,c_d(Q)\) are independent of \(t\).
\end{lemma}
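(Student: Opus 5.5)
The plan is to avoid working with \(Q=A_sM_{m,n,t}\) directly and instead use the standard fact that \(AB\) and \(BA\) have the same characteristic polynomial. This gives \(c_k(Q)=c_k(M_{m,n,t}A_s)\) for every \(k\). The advantage is that \(M_{m,n,t}A_s\) has a much simpler shape than \(Q\).

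Write \(A_s=I+\tilde s\,e_d^{T}\), where \(\tilde s=(s_1,\ldots,s_{d-1},0)^T\). Then
\[
M_{m,n,t}A_s=M_{m,n,t}+(M_{m,n,t}\tilde s)\,e_d^{T}.
\]
The first \(d-1\) columns of \(M_{m,n,t}\) carry only the subdiagonal entries, and the last entry of \(\tilde s\) is zero. Hence
\[
M_{m,n,t}\tilde s=(0,\;m_1s_1,\;m_2s_2,\;\ldots,\;m_{d-1}s_{d-1})^T.
\]
So \(C:=M_{m,n,t}A_s\) is again a matrix with \(m_1,\ldots,m_{d-1}\) on the subdiagonal, zeros elsewhere except the last column, and last column \(b=(b_1,\ldots,b_d)^T\). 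Its entries are \(b_1=n_1\), \(b_i=n_i+m_{i-1}s_{i-1}\) for \(2\le i\le d-1\), and \(b_d=t+m_{d-1}s_{d-1}\).

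The second step is a general formula for such "generalized companion" matrices:
\[
\det(xI-C)=x^d-\sum_{k=1}^{d} b_{d-k+1}\Bigl(\prod_{j=d-k+1}^{d-1}m_j\Bigr)x^{d-k}.
\]
I would prove this using the principal-minor description \(c_k(C)=\sum_{|I|=k}\det(C_I)\). View \(C\) as a weighted digraph with edges \(i\to i+1\) of weight \(m_i\) (from the subdiagonal) and edges \(d\to i\) of weight \(b_i\) (from the last column). A principal minor \(\det(C_I)\) is a signed sum over permutations of \(I\), that is, over cycle covers of \(I\) by edges of this digraph. Every cycle must pass through the vertex \(d\), since all other vertices only step upward. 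The only cycles are therefore \(d\to i\to i+1\to\cdots\to d\), which forces \(I=\{d-k+1,\ldots,d\}\). In this case \(\det(C_I)\) is the sign of a \(k\)-cycle, \((-1)^{k-1}\), times \(b_{d-k+1}\prod_{j=d-k+1}^{d-1}m_j\). This gives
\[
c_k(C)=(-1)^{k-1}b_{d-k+1}\prod_{j=d-k+1}^{d-1}m_j .
\]
Alternatively, the same formula follows by cofactor expansion of \(\det(xI-C)\) along the last column, or by induction on \(d\).

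The final step is substitution. For \(2\le k\le d-1\) we have \(d-k+1\in\{2,\ldots,d-1\}\), so \(b_{d-k+1}=n_{d-k+1}+s_{d-k}m_{d-k}\), which is the claimed formula. For \(k=d\) we have \(b_1=n_1\) and the product runs over all of \(m_1,\ldots,m_{d-1}\), giving \(\det(Q)=(-1)^{d-1}n_1\prod_{j=1}^{d-1}m_j\). The parameter \(t\) enters only through \(b_d\), and \(b_d\) appears only in \(c_1\). Hence \(c_2,\ldots,c_d\) do not depend on \(t\).

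The main point of care is getting the sign and the index bookkeeping right in the cycle-cover step. In particular, one must check that \((-1)^{k-1}\), the sign of a \(k\)-cycle, matches the convention \(\det(xI-A)=\sum_k(-1)^kc_k(A)x^{d-k}\). For a sanity check I would verify the case \(d=2\) by hand. There \(Q\) has rows \((s_1m_1,\;n_1+s_1t)\) and \((m_1,\;t)\), so \(\det Q=s_1m_1t-m_1n_1-m_1s_1t=-m_1n_1\), as predicted.
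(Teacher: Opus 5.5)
Your proof is correct, and it takes a genuinely different route from the paper's.

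\textbf{The paper's route.} The paper works with $Q=A_sM_{m,n,t}$ directly. Since columns $1,\ldots,d-2$ of $Q$ are supported on the subdiagonal, only the principal minors on $I_1=\{d-k,\ldots,d-1\}$ and $I_2=\{d-k+1,\ldots,d\}$ can be non-zero. These two minors are computed separately. $\det(Q_{I_1})$ produces the term $s_{d-k}m_{d-k}\prod_{j=d-k+1}^{d-1}m_j$. $\det(Q_{I_2})$ produces $n_{d-k+1}\prod_{j=d-k+1}^{d-1}m_j$, after a multilinearity argument removes the $t$-contribution: the vector $tu$ in the last column is parallel to the column $m_{d-1}u$.

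\textbf{Your route.} You pass to $M_{m,n,t}A_s$. Since $A_s$ is unipotent, $M_{m,n,t}A_s=A_s^{-1}QA_s$, so plain similarity invariance already suffices and you don't need the general $AB$/$BA$ fact. This conjugate is a genuine companion-type matrix. In it, only the minors on the intervals $\{i,\ldots,d\}$ survive. The shift $s_{d-k}m_{d-k}$ is absorbed into the last-column entry $b_{d-k+1}$, and $t$ sits only in the $(d,d)$ entry.

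\textbf{Comparison.} Your approach gives a single uniform computation for all $1\le k\le d$, with no cancellation step. It also gives a structural reason for the $t$-independence. As a by-product you get $c_1(Q)=t+s_{d-1}m_{d-1}$, which shows that the ordinary trace is exactly the coefficient driven by the recurrence parameter $t$. This fits with why $c_1$ is excluded in the obstructions. The paper's approach has the minor advantage of staying with the matrix whose columns are the edge vectors actually used in the dynamical argument, without passing through a conjugate.
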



\subsection{A finite-index trace lattice}

We now record the consequence of \Cref{Lem: ck-formula-Q} which will be used
later. Let \(\Gamma<\bR\) be a finitely generated subgroup. For
\(m=(m_1,\ldots,m_{d-1})\in\bZ^{d-1}\), define
\[
        \Lambda_m
        =
        m_{d-1}\Gamma
        \times
        m_{d-2}m_{d-1}\Gamma
        \times\cdots\times
        \left(\prod_{j=1}^{d-1}m_j\right)\Gamma
        \subseteq \Gamma^{d-1}.
\]

\begin{lemma}\label{Lem: finite_index_trace_lattice}
Let \(d\geq 2\), let \(\Gamma<\bR\) be a finitely generated subgroup, and let
\[
m=(m_1,\ldots,m_{d-1})\in\bZ^{d-1}
\]
satisfy \(m_j\neq 0\) for every \(j\).
Then for every \(s\in\Gamma^{d-1}\) and every \(t\in\bZ\),
\[
        \left\{
        (c_2(Q),\ldots,c_d(Q)):
        Q=Q(s,m,n,t),\ n\in\Gamma^{d-1}
        \right\}
        =
        \Lambda_m .
\]
In particular, this set is independent of \(s\) and \(t\).
\end{lemma}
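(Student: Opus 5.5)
Fix \(s\in\Gamma^{d-1}\), \(t\in\bZ\) and \(m\) with all \(m_j\neq 0\), and write \(P_k=\prod_{j=d-k+1}^{d-1}m_j\) for \(2\leq k\leq d\). Thus \(P_2=m_{d-1}\), \(P_3=m_{d-2}m_{d-1}\), and \(P_d=\prod_{j=1}^{d-1}m_j\). By \Cref{Lem: ck-formula-Q}, the map
\[
        \Phi\colon \Gamma^{d-1}\to\bR^{d-1},\qquad
        n\mapsto (c_2(Q(s,m,n,t)),\ldots,c_d(Q(s,m,n,t)))
\]
has \(k\)-th coordinate \((-1)^{k-1}(n_{d-k+1}+s_{d-k}m_{d-k})P_k\) for \(2\leq k\leq d-1\), and \((-1)^{d-1}n_1P_d\) for \(k=d\). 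In particular \(\Phi\) does not depend on \(t\). The plan is to show that \(\Phi\) is a bijection of \(\Gamma^{d-1}\) onto \(\Lambda_m\), one coordinate at a time.

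First we check that the image lies in \(\Lambda_m\). The \(k\)-th coordinate of \(\Lambda_m\), indexed as the \((k-1)\)-st factor, is \(P_k\Gamma\). For \(k=d\) this is immediate, since \(n_1\in\Gamma\). For \(2\leq k\leq d-1\) we need \(n_{d-k+1}+s_{d-k}m_{d-k}\in\Gamma\). This holds because \(n_{d-k+1}\in\Gamma\), \(s_{d-k}\in\Gamma\), and \(m_{d-k}\in\bZ\), and \(\Gamma\) is a subgroup of \(\bR\), hence closed under integer multiples. The sign \((-1)^{k-1}\) causes no trouble, since \(P_k\Gamma=-P_k\Gamma\).

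Next we prove surjectivity. Given \((a_2,\ldots,a_d)\in\Lambda_m\), write \(a_k=P_k g_k\) with \(g_k\in\Gamma\); this representation is unique because \(P_k\neq 0\). Set \(n_1=(-1)^{d-1}g_d\). For \(2\leq k\leq d-1\), the index \(d-k+1\) runs exactly over \(2,\ldots,d-1\), so each remaining coordinate \(n_2,\ldots,n_{d-1}\) is governed by exactly one equation. Set
\[
        n_{d-k+1}=(-1)^{k-1}g_k-s_{d-k}m_{d-k}\in\Gamma.
\]
Then \(\Phi(n)=a\). This exhausts the index set \(\{1,\ldots,d-1\}\), so the assignments are consistent. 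When \(d=2\), only the determinant equation occurs and the argument is trivial.

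There is no real obstacle here. The only points needing care are the bookkeeping showing that the indices \(d-k+1\) for \(2\le k\le d-1\), together with index \(1\), cover each \(n_j\) exactly once, and the use of \(m_j\neq 0\) to divide by \(P_k\). Since neither the image nor the construction depends on \(t\), and \(s\) enters only through a shift inside \(\Gamma\), the set obtained is \(\Lambda_m\) for every \(s\) and \(t\). This gives the final assertion of the lemma.
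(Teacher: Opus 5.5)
Your proof is correct and follows the same route as the paper: both read off \(c_2,\ldots,c_d\) from the formula for \(c_k(Q(s,m,n,t))\) and observe that \(n_1\) and the shifted variables \(n_{d-k+1}+s_{d-k}m_{d-k}\) range independently over \(\Gamma\), so the image is exactly \(\Lambda_m\). You simply make explicit the two inclusions and the index bookkeeping that the paper leaves implicit; as a minor aside, writing \(a_k=P_kg_k\) needs no division, so the hypothesis \(m_j\neq 0\) is not actually used for this set equality.
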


\begin{proof}
By \Cref{Lem: ck-formula-Q}, for \(2\leq k\leq d-1\),
\[
        c_k(Q(s,m,n,t))
        =
        (-1)^{k-1}
        \bigl(n_{d-k+1}+s_{d-k}m_{d-k}\bigr)
        \prod_{j=d-k+1}^{d-1}m_j,
\]
and
\[
        c_d(Q(s,m,n,t))
        =
        (-1)^{d-1}n_1\prod_{j=1}^{d-1}m_j .
\]
As \(n\) ranges over \(\Gamma^{d-1}\), each shifted variable
\[
        n_{d-k+1}+s_{d-k}m_{d-k},
        \qquad 2\leq k\leq d-1,
\]
also ranges over all of \(\Gamma\). Thus the possible values of \(c_k\) are
exactly
\[
        \left(\prod_{j=d-k+1}^{d-1}m_j\right)\Gamma,
        \qquad 2\leq k\leq d,
\]
up to signs. This gives the claimed product set \(\Lambda_m\). The formula is
independent of \(t\), and the image set is independent of \(s\), as claimed.
\end{proof}


\section{Directional expansion}

The main result of this section is the following directional expansion theorem.

\begin{theorem}[Quantitative directional expansion]\label{Thm: Quant_direc_exp}
Let \(\delta>0\) and \(\varepsilon>0\). Let \(d\geq 2\), and let
\(\Lambda=\Gamma^{d-1}\times\bZ\), where \(\Gamma<\bR\) is a finitely generated
subgroup. Then there exists
\[
        k_0=k_0(\delta,\varepsilon,\operatorname{rank}(\Gamma),d)\in\bN
\]
such that the following holds: For every ergodic measure-preserving action
\(\Lambda\curvearrowright (X,\mu)\) and every measurable set \(B\subset X\)
with \(\mu(B)\geq\delta\), there exist an integer \(1\leq k\leq k_0\), a
\(k\Lambda\)-ergodic component \(\nu\) of \(\mu\), and elements
\(s_1,\ldots,s_{d-1}\in\Gamma\) such that \(\nu(B)\geq\mu(B)\) and
\[
        \nu\left(
        \bigcup_{\tau\in k\bZ} \tau(s_1,\ldots,s_{d-1},1).B
        \right)
        >
        1-\varepsilon .
\]
\end{theorem}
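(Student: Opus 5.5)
We run a density-increment iteration that combines \Cref{Lem: sigma_B} with \Cref{Lem: measure_increment}. Set $\mu_0=\mu$ and $k_0'=1$. At stage $i$ we have an integer $K_i$ and a $K_i\Lambda$-ergodic component $\mu_i$ of $\mu$ with $\mu_i(B)\ge\mu(B)\ge\delta$. Note that $K_i\Lambda\cong\Lambda$ has the same shape $(K_i\Gamma)^{d-1}\times K_i\bZ$. Apply the results of Section 2 to the action of $\Lambda_i:=K_i\Lambda$ on $(X,\mu_i)$. For a direction $\lambda_s=K_i(s_1,\ldots,s_{d-1},1)$, \Cref{Lem: sigma_B} gives
\[
\mu_i\Bigl(\bigcup_{\tau\in K_i\bZ}\tau(s,1).B\Bigr)>\frac{\mu_i(B)^2}{\sigma_{\mu_i,B}(L_{\lambda_s}^\perp)}.
\]
The dichotomy is as follows. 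Either some $s\in\Gamma^{d-1}$ makes $\sigma_{\mu_i,B}(L_{\lambda_s}^\perp)<\mu_i(B)^2/(1-\varepsilon)$, and then we stop with $k=K_i$, $\nu=\mu_i$. Or, for every $s$,
\[
\sigma_{\mu_i,B}\bigl(L_{\lambda_s}^\perp\setminus\{1\}\bigr)\ge \mu_i(B)^2\Bigl(\tfrac{1}{1-\varepsilon}-1\Bigr)\ge c\,\delta^2\varepsilon ,
\]
using \Cref{Lem: Proj_Eig} for the atom at $1$.

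The key step is to convert the second alternative into rational mass of bounded order. We choose a haystack-like family of directions $s^{(1)},s^{(2)},\ldots$ in $\Gamma^{d-1}$. With $R=(d-1)\operatorname{rank}\Gamma+1$, the vectors $(s^{(j)},1)$ should be chosen so that any $R$ of them generate a finite-index subgroup of $\Lambda$, with index bounded by some $I=I(R,\text{choice})$. This is possible by choosing polynomial or moment-curve type vectors in a basis of $\Gamma^{d-1}\times\bZ$ with last coordinate $1$ and taking the first $J$ of them. A character lying in $L^\perp_{\lambda_{s^{(j)}}}$ for $R$ distinct $j$'s is trivial on a subgroup of index at most $I$ in $\Lambda_i$, so it lies in $\Rat_{I}(\Lambda_i)\cup\{1\}$. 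Now take $J$ directions with $J\,c\delta^2\varepsilon$ much larger than $(R-1)\mu_i(B)\le R$. Summing over $j$ and counting multiplicities, every non-trivial character outside $\Rat_I$ is counted at most $R-1$ times. Hence
\[
J c\delta^2\varepsilon\le (R-1)\sigma(\widehat{\Lambda_i}\setminus\{1\})+J\,\sigma(\Rat_I(\Lambda_i)).
\]
Since the total mass is $\mu_i(B)\le1$, this yields $\sigma_{\mu_i,B}(\Rat_I(\Lambda_i))\ge c\delta^2\varepsilon/2$ once $J\ge 2R/(c\delta^2\varepsilon)$. Here $I$ depends only on $J$, $R$ and the fixed choice of directions, hence only on $\delta,\varepsilon,\operatorname{rank}\Gamma,d$.

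Then \Cref{Lem: measure_increment}, with $M=I$, gives $k'=k(I)$ and a $k'K_i\Lambda$-ergodic component $\mu_{i+1}$ of $\mu_i$ with
\[
\mu_{i+1}(B)^2\ge \mu_i(B)^2+c\delta^2\varepsilon/2 .
\]
By \Cref{Prop: erg_com_k} applied twice, this is also a $K_{i+1}\Lambda$-ergodic component of $\mu$, where $K_{i+1}=k'K_i$; its form is normalized restriction to an invariant set, and the components of components of $\mu$ are such. Since $\mu_i(B)^2\le1$, the iteration stops after at most $2/(c\delta^2\varepsilon)$ steps. This gives $k\le k(I)^{2/(c\delta^2\varepsilon)}=:k_0$, with $\nu(B)\ge\mu(B)$ by monotonicity.

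The main obstacle is the uniformity in the counting step. We must fix the direction family, and hence the index bound $I$, independently of the action and of $i$. Expressing everything in the fixed basis of $\Lambda_i\cong\Lambda$ handles this, since each rescaled lattice is isomorphic via $x\mapsto K_i x$ and directions $(s,1)$ scale to $K_i(s,1)$. We must also check that the non-atomic parts of $L_\lambda^\perp$ are handled by the $R$-fold intersection argument, which holds because the annihilators are closed subgroups and the counting is pointwise in $\chi$.
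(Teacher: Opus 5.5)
Your proposal is correct and follows essentially the same route as the paper: moment-curve directions $(s,1)$ with uniformly bounded index for any $R$ of them, the pointwise $(R-1)$-fold counting argument that turns failure of expansion into spectral mass on $\Rat_M$ of bounded order, and then a measure-increment iteration with a bounded number of steps. The only differences are presentational: you fold the paper's dichotomy proposition into the iteration, and you spell out the identification $K\Lambda\cong\Lambda$ (using directions $K(s,1)$) and the explicit bound on $k_0$ more carefully than the paper does.
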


\subsection{The directional dichotomy}

The proof of \Cref{Thm: Quant_direc_exp} is based on the following dichotomy.

\begin{proposition}\label{Prop: Dicotomy}
Let \(\delta>0\) and \(\varepsilon>0\). Let \(d\geq 2\), and let
\(\Lambda=\Gamma^{d-1}\times\bZ\), where \(\Gamma<\bR\) is a finitely generated
subgroup. Then there exists
\[
        M=M(\delta,\varepsilon,\operatorname{rank}(\Gamma),d)\in\bN
\]
such that the following holds. Let
\(\Lambda\curvearrowright (X,\mu)\) be an ergodic measure-preserving action,
and let \(B\subset X\) be measurable with \(\mu(B)\geq\delta\). If
\[
        \sigma_B(\Rat_M(\Lambda))
        <
        \frac{\delta^2\varepsilon^2}{4},
\]
then there exist \(s_1,\ldots,s_{d-1}\in\Gamma\) such that
\[
        \mu\left(
        \bigcup_{\tau\in\bZ} \tau(s_1,\ldots,s_{d-1},1).B
        \right)
        >
        1-\varepsilon .
\]
\end{proposition}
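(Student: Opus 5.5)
We would apply \Cref{Lem: sigma_B} to $\lambda=(s_1,\ldots,s_{d-1},1)$. It gives
\[
\mu\Bigl(\bigcup_{\tau\in\bZ}\tau\lambda.B\Bigr)>\frac{\mu(B)^2}{\sigma_B(L_\lambda^\perp)}.
\]
So it suffices to find $s\in\Gamma^{d-1}$ with $\sigma_B(L_\lambda^\perp)<\mu(B)^2/(1-\varepsilon)$. Since $\sigma_B(\{1\})=\mu(B)^2$ by \Cref{Lem: Proj_Eig}, and $1/(1-\varepsilon)>1+\varepsilon$, it is enough to arrange
\[
\sigma_B\bigl(L_\lambda^\perp\setminus\{1\}\bigr)\le \varepsilon\,\mu(B)^2\cdot c
\]
for a suitable constant $c$ (say $c=1/2$). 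We split $L_\lambda^\perp\setminus\{1\}$ into three pieces:
\begin{itemize}
\item the part lying in $\Rat_M(\Lambda)$, which has mass $<\delta^2\varepsilon^2/4\le \varepsilon\mu(B)^2/4$ by hypothesis;
\item the rational characters of order $>M$;
\item the non-rational characters.
\end{itemize}
The whole proof is to choose $s$ so that the last two pieces carry small total mass.

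The key step is an averaging argument over a haystack of directions. We pick an infinite family of vectors $\lambda^{(j)}=(s^{(j)},1)$, $j\ge 1$, with $s^{(j)}\in\Gamma^{d-1}$, such that any $R=\operatorname{rank}(\Lambda)=(d-1)\operatorname{rank}(\Gamma)+1$ of them are $\bQ$-linearly independent. A generic-moment-curve choice such as $s^{(j)}=(j\gamma,j^2\gamma,\ldots)$ in coordinates of a basis of $\Gamma$ should work. Suppose a character $\chi\neq 1$ lies in $L_{\lambda^{(j)}}^\perp$ for $R$ distinct indices $j$. Then $\chi$ is trivial on a finite-index subgroup, so it is rational, and its order divides the index of the subgroup those $R$ vectors generate. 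Taking the first $N$ vectors, that index is bounded by some $D_N$ depending only on $N$, $\operatorname{rank}(\Gamma)$ and $d$. So with $M\ge D_N$, every $\chi\notin\Rat_M(\Lambda)\cup\{1\}$ lies in at most $R-1$ of the sets $L_{\lambda^{(j)}}^\perp$, $j\le N$. Summing gives
\[
\sum_{j=1}^N\sigma_B\bigl(L_{\lambda^{(j)}}^\perp\setminus(\Rat_M\cup\{1\})\bigr)\le (R-1)\,\sigma_B(\widehat\Lambda)=(R-1)\mu(B)\le R-1.
\]
Hence some $j\le N$ has this mass at most $(R-1)/N$. We choose $N\ge 4(R-1)/(\varepsilon\delta^2)$ and then $M=D_N$. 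Both depend only on $\delta$, $\varepsilon$, $\operatorname{rank}(\Gamma)$ and $d$, as required.

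Finally we combine the estimates for this $j$. We have $\sigma_B(L_\lambda^\perp)\le \mu(B)^2+\varepsilon\delta^2/4+\varepsilon\delta^2/4\le \mu(B)^2(1+\varepsilon/2)$, using $\varepsilon\le1$, which we may assume. This gives orbit measure $>1/(1+\varepsilon/2)>1-\varepsilon$.

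The main obstacle is making the haystack quantitative. We need an explicit family of directions whose last coordinate is exactly $1$, such that every $R$-subset among the first $N$ spans a sublattice of controlled index. The plan is to compute determinants of Vandermonde type. Writing $\Gamma\cong\bZ^{r}$, we take $s^{(j)}$ to have integer coordinates $j,j^2,\ldots,j^{R-1}$ in the basis, so that the full vector is $(1,j,\ldots,j^{R-1})$ up to reordering. Then any $R$ of these vectors have a nonzero Vandermonde determinant bounded by $N^{R^2}$, so we may take $D_N=N^{R^2}$ and $M=D_N!$. Taking the factorial is harmless and ensures every relevant order is $\le M$.
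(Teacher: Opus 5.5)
Your proposal is correct and follows essentially the same route as the paper. Both arguments apply \Cref{Lem: sigma_B} to a direction taken from the moment-curve haystack with coordinates \((t,t^2,\ldots,t^{R-1},1)\). They then use a Vandermonde bound on the index, so that any character lying in \(R\) of the annihilators \(L_\lambda^\perp\) belongs to \(\Rat_M(\Lambda)\cup\{1\}\), and finish with a pigeonhole over the haystack to find \(\lambda\) with \(\sigma_B(L_\lambda^\perp\setminus\{1\})\) small. The differences are only bookkeeping: you fix the number \(N\) of directions first and derive \(M\) from the determinant bound, whereas the paper fixes \(M\) and works with the norm-bounded set \(\cH_M\). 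Your factorial in \(M=D_N!\) is unnecessary, since the order of \(\chi\) already divides an index of at most \(D_N\).
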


We now define a haystack in \(\Lambda\) consisting of elements whose last
coordinate is \(1\). Let \(r=\operatorname{rank}(\Gamma)\), and let
\(\mathscr A=\{a_1,\ldots,a_r\}\) be a \(\bZ\)-basis of \(\Gamma\). Then
\(R:=\operatorname{rank}(\Lambda)=r(d-1)+1\). We use the associated basis
\[
        \mathscr B
        =
        \{a_je_i:1\leq i\leq d-1,\ 1\leq j\leq r\}
        \cup\{e_d\}
\]
of \(\Lambda\), where \(e_1,\ldots,e_d\) are the standard coordinate vectors.
If \(\lambda=\sum_{j=1}^R n_j(\lambda)\beta_j\) with respect to this basis, define
\[
        \|\lambda\|_{\mathscr B,\infty}
        =
        \max_{1\leq j\leq R}|n_j(\lambda)|.
\]

\begin{proposition}\label{Prop: Haystack_s_i}
Let \(\Gamma<\bR\) be a finitely generated subgroup of rank \(r\), let
\(d\geq 2\), and set \(\Lambda=\Gamma^{d-1}\times\bZ\). Choose a basis
\(\{a_1,\ldots,a_r\}\) of \(\Gamma\), and define
\[
\mathscr H
=
\left\{
h_t
:=
\left(
\sum_{j=1}^r t^j a_j,\;
\sum_{j=1}^r t^{r+j} a_j,\;
\ldots,\;
\sum_{j=1}^r t^{(d-2)r+j}a_j,\;
1
\right)
:\ t\in\bZ
\right\}.
\]
Then \(\mathscr H\) is a haystack in \(\Lambda\).
\end{proposition}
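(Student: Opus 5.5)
To prove \Cref{Prop: Haystack_s_i}, I will write each \(h_t\) in coordinates with respect to the basis \(\mathscr B\) and show that any \(R=r(d-1)+1\) distinct elements \(h_{t_0},\ldots,h_{t_{R-1}}\) are linearly independent over \(\bQ\). This is the equivalent formulation of the haystack condition recalled in the preliminaries. Two further properties are needed: \(\mathscr H\) must be infinite and must consist of primitive vectors.

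First, the coordinates. Order \(\mathscr B\) by placing \(e_d\) first and then \(a_je_i\) in the order \((i,j)\mapsto (i-1)r+j\). With this ordering, the coordinate vector of \(h_t\) is exactly
\[
        (1,t,t^2,\ldots,t^{R-1})\in\bZ^R ,
\]
because the \(a_j\)-coefficient in the \(i\)-th slot is \(t^{(i-1)r+j}\), and the exponents \((i-1)r+j\) run bijectively over \(1,\ldots,(d-1)r=R-1\).

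From this, primitivity is immediate: the coordinate equal to \(1\) forces \(\gcd=1\). Since \(t\mapsto h_t\) is injective (already the second coordinate \(t\) distinguishes them), \(\mathscr H\) is infinite.

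For the independence, take distinct integers \(t_0,\ldots,t_{R-1}\). The \(R\times R\) matrix whose rows are the coordinate vectors of \(h_{t_0},\ldots,h_{t_{R-1}}\) is a Vandermonde matrix. Its determinant is
\[
        \prod_{0\le a<b\le R-1}(t_b-t_a)\neq 0 .
\]
Hence these \(R\) vectors are linearly independent over \(\bQ\), and so they generate a finite-index subgroup of \(\Lambda\cong\bZ^R\). Its index is the absolute value of this Vandermonde determinant.

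None of these steps is a real obstacle. The only point requiring care is the bookkeeping check that the exponents \((i-1)r+j\), together with the exponent \(0\) coming from the last coordinate, form exactly \(\{0,1,\ldots,R-1\}\) without repetition. This is what makes the matrix a genuine Vandermonde matrix rather than a generalized one, whose determinant could vanish for some choices of the \(t_a\).
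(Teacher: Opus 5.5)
Your proof is correct and follows the paper's argument. The coordinate vector of \(h_t\) in the basis \(\mathscr B\) is \((1,t,\ldots,t^{R-1})\) up to reordering, primitivity comes from the coordinate equal to \(1\), and any \(R\) distinct elements give a non-zero Vandermonde determinant; you also make explicit that \(\mathscr H\) is infinite, which the paper leaves implicit.
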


\begin{proof}
With respect to the basis \(\mathscr B\), the element \(h_t\) has coordinate
vector
\[
        (t,t^2,\ldots,t^{R-1},1).
\]
Hence \(h_t\) is primitive. It remains to show that every \(R\) distinct
elements of \(\mathscr H\) generate a finite-index subgroup of \(\Lambda\). Let
\(h_{t_1},\ldots,h_{t_R}\) be distinct elements of \(\mathscr H\). Then
\(t_1,\ldots,t_R\) are distinct integers. Let \(C\) be the \(R\times R\)
integer matrix whose columns are the coordinate vectors of these elements in
the basis \(\mathscr B\):
\[
C=
\begin{pmatrix}
t_1 & t_2 & \cdots & t_R\\
t_1^2 & t_2^2 & \cdots & t_R^2\\
\vdots & \vdots & \ddots & \vdots\\
t_1^{R-1} & t_2^{R-1} & \cdots & t_R^{R-1}\\
1 & 1 & \cdots & 1
\end{pmatrix}.
\]
Moving the last row to the first row gives a Vandermonde matrix. Hence
\[
        \det(C)
        =
        \pm\prod_{1\leq i<j\leq R}(t_j-t_i),
\]
which is non-zero. Therefore the subgroup generated by
\(h_{t_1},\ldots,h_{t_R}\) has full rank in \(\Lambda\), and hence finite
index. This proves that \(\mathscr H\) is a haystack.
\end{proof}

For \(M\geq 1\), define
\[
        \cH_M
        =
        \left\{
        \lambda\in\mathscr H:
        \|\lambda\|_{\mathscr B,\infty}
        \leq
        \left(\frac{M}{R!}\right)^{1/R}
        \right\}.
\]

\begin{lemma}\label{lem: H_M_independence}
The cardinality of \(\cH_M\) depends only on \(M\) and \(R\). In particular, it
is independent of the choice of basis of \(\Gamma\). Moreover, for fixed
\(R\), one has
\[
        |\cH_M|\to\infty
        \qquad\text{as } M\to\infty .
\]
\end{lemma}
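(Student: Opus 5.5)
We compute \(\|h_t\|_{\mathscr B,\infty}\) directly. By the proof of \Cref{Prop: Haystack_s_i}, the coordinate vector of \(h_t\) in the basis \(\mathscr B\) is \((t,t^2,\ldots,t^{R-1},1)\). This vector depends only on \(t\) and \(R=r(d-1)+1\). It does not depend on the particular elements \(a_1,\ldots,a_r\), because the basis \(\mathscr B\) is built from whatever basis of \(\Gamma\) was chosen, and \(h_t\) is defined in terms of that same basis. Hence
\[
        \|h_t\|_{\mathscr B,\infty}
        =
        \max\bigl(1,|t|,|t|^2,\ldots,|t|^{R-1}\bigr)
        =
        \max(1,|t|)^{R-1}.
\]
(For \(R=1\), i.e. \(r=0\), the set \(\mathscr H\) consists of a single element and the claim is trivial. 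Since \(\Gamma\) is presumably nontrivial, we may take \(R\geq 2\).)

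Next, we note that \(t\mapsto h_t\) is injective, since the first coordinate of the coordinate vector is \(t\). Setting \(T_M=(M/R!)^{1/R}\), we get
\[
        \cH_M
        \;\longleftrightarrow\;
        \{t\in\bZ:\max(1,|t|)^{R-1}\leq T_M\},
\]
and this set is empty if \(T_M<1\). If \(T_M\geq1\), it equals \(\{t\in\bZ:|t|\leq T_M^{1/(R-1)}\}\). Therefore
\[
        |\cH_M|=2\bigl\lfloor T_M^{1/(R-1)}\bigr\rfloor+1
        \quad\text{when } T_M\ge 1,
        \qquad
        |\cH_M|=0
        \quad\text{otherwise}.
\]
This formula involves only \(M\) and \(R\), which proves the first two assertions, including independence from the choice of basis of \(\Gamma\).

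Finally, for fixed \(R\) we have \(T_M\to\infty\) as \(M\to\infty\), so \(T_M^{1/(R-1)}\to\infty\) and hence \(|\cH_M|\to\infty\).

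The only point needing care is that the norm is computed in the basis \(\mathscr B\) adapted to the same basis of \(\Gamma\) used to define \(\mathscr H\). If the two bases were chosen independently, coordinates would change by an element of \(\mathrm{GL}_R(\bZ)\), and the count would no longer be basis-free. So the lemma should be read with this convention, and we expect no further obstacle.
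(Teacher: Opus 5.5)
Your proof is correct and is essentially the paper's argument: you read off the coordinate vector \((t,t^2,\ldots,t^{R-1},1)\) in \(\mathscr B\), note that the sup-norm is \(\max(1,|t|)^{R-1}\), and count the integers \(t\) with \(|t|\le (M/R!)^{1/(R(R-1))}\). Your separate handling of \(M<R!\), where \(\cH_M\) is empty, is slightly more careful than the paper's closed formula \(2\lfloor (M/R!)^{1/(R(R-1))}\rfloor+1\), which gives \(1\) in that range; one small quibble is that for \(R=1\) the divergence claim would fail rather than be trivial, but that case is excluded anyway since \(\mathscr H\) must be infinite.
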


\begin{proof}
With respect to the basis \(\mathscr B\), the coordinate vector of \(h_t\) is
\[
        (t,t^2,\ldots,t^{R-1},1).
\]
Hence
\[
        \|h_t\|_{\mathscr B,\infty}
        =
        \max\{|t|,|t|^2,\ldots,|t|^{R-1},1\}.
\]
Thus \(h_t\in\cH_M\) if and only if
\[
        |t|
        \leq
        \left(\frac{M}{R!}\right)^{1/(R(R-1))}.
\]
Consequently,
\[
        |\cH_M|
        =
        2\left\lfloor
        \left(\frac{M}{R!}\right)^{1/(R(R-1))}
        \right\rfloor+1 .
\]
This depends only on \(M\) and \(R\), and tends to infinity with \(M\) for fixed
\(R\).
\end{proof}

\begin{lemma}\label{Lem: annilator}
Let \(d\geq 2\), let \(\Lambda=\Gamma^{d-1}\times\bZ\), and let
\(R=\operatorname{rank}(\Lambda)\). If
\(\lambda_1,\ldots,\lambda_R\in\cH_M\) are distinct, then
\[
        \bigcap_{i=1}^R L_{\lambda_i}^{\perp}
        \subseteq
        \Rat_M(\Lambda)\cup\{1\}.
\]
\end{lemma}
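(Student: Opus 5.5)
Let \(\chi\in\bigcap_{i=1}^R L_{\lambda_i}^\perp\) with \(\chi\neq 1\). We must show that \(\chi^n=1\) for some \(1<n\leq M\).

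\textbf{Step 1: \(\chi\) kills a finite-index subgroup.} Let \(H=\langle\lambda_1,\ldots,\lambda_R\rangle\). Since \(\chi(\lambda_i)=1\) for every \(i\), we have \(\chi\in H^\perp\). By \Cref{Prop: Haystack_s_i}, \(\cH_M\subseteq\mathscr H\) is a haystack, so \(H\) has finite index in \(\Lambda\). Let \(C\) be the \(R\times R\) integer matrix whose columns are the \(\mathscr B\)-coordinate vectors of the \(\lambda_i\). Then \([\Lambda:H]=|\det C|\). Set \(D=|\det C|\). Since \(D\Lambda\subseteq H\) (for instance via the adjugate, \(D\,C^{-1}\) is an integer matrix), we get \(\chi^D(\lambda)=\chi(D\lambda)=1\) for all \(\lambda\). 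So \(\chi^D=1\). Because \(\chi\neq 1\), the order \(n\) of \(\chi\) satisfies \(1<n\) and \(n\mid D\), hence \(n\leq D\).

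\textbf{Step 2: bound \(D\) by Hadamard-type estimates.} By Leibniz's formula, \(D\) is at most \(R!\) times the product over columns of the maximal entry in each column. Each column has sup-norm at most \((M/R!)^{1/R}\), since \(\lambda_i\in\cH_M\). Therefore
\[
D\le R!\prod_{i=1}^R\|\lambda_i\|_{\mathscr B,\infty}\le R!\cdot\frac{M}{R!}=M.
\]
Hence \(n\leq M\), so \(\chi\in\Rat_M(\Lambda)\). This is exactly where the normalisation \((M/R!)^{1/R}\) in the definition of \(\cH_M\) is used.

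\textbf{Main obstacle.} The only delicate point is the claim that \(\chi\) restricted to \(H\) trivial forces \(\chi^D=1\), which needs \(D\Lambda\subseteq H\). This follows from Cramer's rule: for any \(v\in\bZ^R\), the vector \(Dv=C\,(\pm\operatorname{adj}(C)v)\) is an integer combination of the columns of \(C\). Everything else is bookkeeping.
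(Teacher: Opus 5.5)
Your proposal is correct and follows the paper's proof essentially step for step: the haystack property gives $H$ finite index $|\det C|$, hence $\chi^{|\det C|}=1$, and the Leibniz bound $|\det C|\le R!\,\bigl((M/R!)^{1/R}\bigr)^R=M$ finishes the argument. The only cosmetic differences are that you justify $D\Lambda\subseteq H$ via the adjugate, where the paper appeals directly to the index, and that you conclude through the order of $\chi$ rather than through $|\det C|$ itself.
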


\begin{proof}
Let \(\lambda_1,\ldots,\lambda_R\in\cH_M\) be distinct. Since \(\mathscr H\) is
a haystack, these elements generate a finite-index subgroup of \(\Lambda\).
Write
\[
        \lambda_i=\sum_{j=1}^R d_{ij}\beta_j,
        \qquad d_{ij}\in\bZ,
\]
and let \(D=(d_{ij})\in\Mat_R(\bZ)\). Since the \(\lambda_i\) are independent,
\(\det(D)\neq 0\). Since each \(\lambda_i\in\cH_M\),
\[
        |d_{ij}|
        \leq
        \left(\frac{M}{R!}\right)^{1/R}.
\]
Hence
\[
        1\leq |\det(D)|
        \leq
        R!\left(\left(\frac{M}{R!}\right)^{1/R}\right)^R
        =
        M .
\]
Let \(\chi\in\bigcap_{i=1}^R L_{\lambda_i}^{\perp}\), and set
\(n=|\det(D)|\). The subgroup \(L=\langle \lambda_1,\ldots,\lambda_R\rangle\)
has index \(n\). Hence \(n\lambda\in L\) for every \(\lambda\in\Lambda\). Thus
there are integers \(m_1,\ldots,m_R\) such that
\[
        n\lambda=\sum_{i=1}^R m_i\lambda_i .
\]
Applying \(\chi\), we get
\[
        \chi(\lambda)^n
        =
        \prod_{i=1}^R \chi(\lambda_i)^{m_i}
        =
        1 .
\]
Since this holds for every \(\lambda\in\Lambda\), either \(\chi=1\), or
\(\chi\) has order \(1<n\leq M\). We conclude that
\(\chi\in\Rat_M(\Lambda)\cup\{1\}\).
\end{proof}

\begin{lemma}\label{Lem: bound_L_gamma}
Let \(d\geq 2\), let \(\Lambda=\Gamma^{d-1}\times\bZ\), and let
\(R=\operatorname{rank}(\Lambda)\). For every \(\eta>0\), there exists
\(M=M(\eta,R)\in\bN\) such that the following holds. If \(\sigma\) is a finite
Borel measure on \(\widehat{\Lambda}\) with
\[
        \sigma(\widehat{\Lambda})\leq 1
        \qquad\text{and}\qquad
        \sigma(\Rat_M(\Lambda))\leq \frac{\eta}{2},
\]
then there exists \(\lambda\in\cH_M\) such that
\[
        \sigma(L_\lambda^\perp\setminus\{1\})
        \leq
        \eta .
\]
\end{lemma}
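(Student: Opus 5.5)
We argue by pigeonhole over the annihilators \(L_\lambda^\perp\), \(\lambda\in\cH_M\), using \Cref{Lem: annilator} to control their overlaps. Fix \(\eta>0\). By \Cref{lem: H_M_independence}, we may choose \(M=M(\eta,R)\) so large that
\[
        N:=|\cH_M|\geq \frac{2(R-1)}{\eta}+1 .
\]
This is possible because \(|\cH_M|\to\infty\) and \(|\cH_M|\) depends only on \(M\) and \(R\). Now let \(\sigma\) be as in the statement, and suppose for contradiction that
\[
        \sigma(L_\lambda^\perp\setminus\{1\})>\eta
        \qquad\text{for every }\lambda\in\cH_M .
\]

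Put \(Y=\widehat{\Lambda}\setminus\bigl(\Rat_M(\Lambda)\cup\{1\}\bigr)\). By \Cref{Lem: annilator}, every \(\chi\in Y\) lies in \(L_\lambda^\perp\) for at most \(R-1\) distinct \(\lambda\in\cH_M\). Indeed, if \(\chi\) lay in \(R\) of them, then \(\chi\in\Rat_M(\Lambda)\cup\{1\}\), which is impossible. Consequently,
\[
        \sum_{\lambda\in\cH_M}1_{L_\lambda^\perp\cap Y}\leq (R-1)1_Y .
\]
Integrating against \(\sigma\) and using \(\sigma(\widehat{\Lambda})\leq 1\), we obtain
\[
        \sum_{\lambda\in\cH_M}\sigma(L_\lambda^\perp\cap Y)\leq R-1 .
\]

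On the other hand, \(L_\lambda^\perp\setminus\{1\}\subseteq (L_\lambda^\perp\cap Y)\cup\Rat_M(\Lambda)\). Hence, for each \(\lambda\in\cH_M\),
\[
        \sigma(L_\lambda^\perp\cap Y)
        >
        \eta-\sigma(\Rat_M(\Lambda))
        \geq
        \frac{\eta}{2}.
\]
Summing over the \(N\) elements of \(\cH_M\) gives \(N\eta/2< R-1\), which contradicts the choice of \(N\). Therefore some \(\lambda\in\cH_M\) satisfies \(\sigma(L_\lambda^\perp\setminus\{1\})\leq\eta\).

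The only delicate point is the multiplicity bound, which requires the \(\lambda\)'s to be distinct; this is automatic because we sum over distinct elements of \(\cH_M\). No measurability issues arise, since each \(L_\lambda^\perp\) is closed and \(\Rat_M(\Lambda)\) is a countable union of closed subgroups. Note also that \(M\) depends only on \(\eta\) and \(R\), as required.
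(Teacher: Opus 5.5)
Your proof is correct and follows essentially the same route as the paper. The paper also sets \(E_\lambda=L_\lambda^\perp\setminus(\Rat_M(\Lambda)\cup\{1\})\) (your \(L_\lambda^\perp\cap Y\)), uses \Cref{Lem: annilator} to get \(\sum_{\lambda\in\cH_M}1_{E_\lambda}\leq R-1\), and chooses \(M\) with \((R-1)/|\cH_M|\leq\eta/2\). The only difference is cosmetic: the paper directly takes a \(\lambda\) minimizing \(\sigma(E_\lambda)\) rather than arguing by contradiction.
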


\begin{proof}
For \(\lambda\in\cH_M\), set
\[
        E_\lambda
        =
        L_\lambda^\perp\setminus(\Rat_M(\Lambda)\cup\{1\}).
\]
We claim that
\[
        \sum_{\lambda\in\cH_M}1_{E_\lambda}\leq R-1 .
\]
Indeed, if a character \(\chi\) belonged to
\(E_{\lambda_1},\ldots,E_{\lambda_R}\) for distinct
\(\lambda_1,\ldots,\lambda_R\in\cH_M\), then
\[
        \chi\in\bigcap_{i=1}^R L_{\lambda_i}^\perp .
\]
By \Cref{Lem: annilator}, this implies
\[
        \chi\in\Rat_M(\Lambda)\cup\{1\},
\]
contrary to the definition of the sets \(E_{\lambda_i}\).

Therefore,
\[
        |\cH_M|\min_{\lambda\in\cH_M}\sigma(E_\lambda)
        \leq
        \sum_{\lambda\in\cH_M}\sigma(E_\lambda)
        =
        \int_{\widehat{\Lambda}}
        \sum_{\lambda\in\cH_M}1_{E_\lambda}\,d\sigma
        \leq
        R-1 .
\]
By \Cref{lem: H_M_independence}, we may choose \(M=M(\eta,R)\) so large that
\[
        \frac{R-1}{|\cH_M|}
        \leq
        \frac{\eta}{2}.
\]
For this \(M\), there is \(\lambda\in\cH_M\) such that
\[
        \sigma(E_\lambda)\leq\frac{\eta}{2}.
\]
Since
\[
        L_\lambda^\perp\setminus\{1\}
        \subseteq
        E_\lambda\cup\Rat_M(\Lambda),
\]
we get
\[
        \sigma(L_\lambda^\perp\setminus\{1\})
        \leq
        \sigma(E_\lambda)+\sigma(\Rat_M(\Lambda))
        \leq
        \eta .
\]
\end{proof}

\begin{proof}[Proof of \Cref{Prop: Dicotomy}]
Let \(B\subset X\) satisfy \(\mu(B)\geq\delta\), and set
\[
        \eta=\frac{\delta^2\varepsilon^2}{2}.
\]
Then
\[
        \frac{\mu(B)^2}{\mu(B)^2+\eta}>1-\varepsilon .
\]
Let \(R=\operatorname{rank}(\Lambda)=r(d-1)+1\), where
\(r=\operatorname{rank}(\Gamma)\). Let \(\mathscr H\) be the haystack from
\Cref{Prop: Haystack_s_i}. Every element of \(\mathscr H\) has the form
\[
        \lambda=(s_1,\ldots,s_{d-1},1),
        \qquad s_1,\ldots,s_{d-1}\in\Gamma .
\]
By \Cref{Lem: bound_L_gamma}, there exists
\[
        M=M(\eta,R)=M(\delta,\varepsilon,\operatorname{rank}(\Gamma),d)
\]
such that, if
\[
        \sigma_B(\Rat_M(\Lambda))
        <
        \frac{\delta^2\varepsilon^2}{4}
        =
        \frac{\eta}{2},
\]
then there exists \(\lambda\in\cH_M\) such that
\[
        \sigma_B(L_\lambda^\perp\setminus\{1\})\leq\eta .
\]
By \Cref{Lem: Proj_Eig},
\[
        \sigma_B(\{1\})=\mu(B)^2 .
\]
Hence
\[
        \sigma_B(L_\lambda^\perp)
        \leq
        \mu(B)^2+\eta .
\]
Applying \Cref{Lem: sigma_B}, we obtain
\[
        \mu\left(\bigcup_{\tau\in\bZ} \tau\lambda.B\right)
        >
        \frac{\mu(B)^2}{\sigma_B(L_\lambda^\perp)}
        \geq
        \frac{\mu(B)^2}{\mu(B)^2+\eta}
        >
        1-\varepsilon .
\]
Since \(\lambda=(s_1,\ldots,s_{d-1},1)\) for some
\(s_1,\ldots,s_{d-1}\in\Gamma\), this proves the proposition.
\end{proof}


\subsection{Proof of the directional expansion theorem}

We now prove \Cref{Thm: Quant_direc_exp} from
\Cref{Prop: Dicotomy} and \Cref{Lem: measure_increment}.

\begin{proof}[Proof of \Cref{Thm: Quant_direc_exp}]
It is enough to consider \(0<\varepsilon<1\). Fix \(\delta>0\) and
\(\varepsilon>0\), and let \(\Lambda=\Gamma^{d-1}\times\bZ\), where
\(\Gamma<\bR\) is finitely generated. Let
\(\Lambda\curvearrowright (X,\mu)\) be an ergodic measure-preserving action,
and let \(B\subset X\) be measurable with \(\mu(B)\geq\delta\). Set
\[
        \theta=\frac{\delta^2\varepsilon^2}{4}.
\]
If there exist \(s_1,\ldots,s_{d-1}\in\Gamma\) such that
\[
        \mu\left(
        \bigcup_{\tau\in\bZ} \tau(s_1,\ldots,s_{d-1},1).B
        \right)
        >
        1-\varepsilon ,
\]
then the conclusion holds with \(k=1\) and \(\nu=\mu\). We may therefore
assume that
\[
        \mu\left(
        \bigcup_{\tau\in\bZ} \tau(s_1,\ldots,s_{d-1},1).B
        \right)
        \leq
        1-\varepsilon
\]
for every \(s_1,\ldots,s_{d-1}\in\Gamma\). By \Cref{Prop: Dicotomy}, there
exists
\[
        M=M(\delta,\varepsilon,\operatorname{rank}(\Gamma),d)\in\bN
\]
such that
\[
        \sigma_B(\Rat_M(\Lambda))\geq\theta .
\]
Applying \Cref{Lem: measure_increment} with this \(M\), we obtain an integer
\(k_1=k_1(M)\) and a \(k_1\Lambda\)-ergodic component \(\nu_1\) of \(\mu\)
such that
\[
        \nu_1(B)
        \geq
        \sqrt{\mu(B)^2+\sigma_B(\Rat_M(\Lambda))}
        \geq
        \sqrt{\mu(B)^2+\theta}.
\]
Since \(\mu(B)\leq 1\) and \(\theta\leq 1\), this gives
\[
        \nu_1(B)\geq \mu(B)+\frac{\theta}{3}.
\]
We now iterate the same argument. Suppose that at some stage we have a \(K\Lambda\)-ergodic component \(\nu\) of
\(\mu\), where \(K\in\bN\), and \(\nu(B)\geq\delta\). Initially
\(K=1\) and \(\nu=\mu\). If there exist \(s_1,\ldots,s_{d-1}\in\Gamma\) such that
\[
        \nu\left(
        \bigcup_{\tau\in K\bZ} \tau(s_1,\ldots,s_{d-1},1).B
        \right)
        >
        1-\varepsilon ,
\]
then the conclusion holds with \(k=K\). Otherwise, apply
\Cref{Prop: Dicotomy} to the \(K\Lambda\)-action. Since
\[
        K\Lambda=(K\Gamma)^{d-1}\times K\bZ
\]
and \(\operatorname{rank}(K\Gamma)=\operatorname{rank}(\Gamma)\), the same
choice of \(M\) applies. Thus the spectral measure of \(B\), now computed for
the \(K\Lambda\)-action on \((X,\nu)\), satisfies
\[
        \sigma_{\nu,B}(\Rat_M(K\Lambda))\geq \theta .
\]
Applying \Cref{Lem: measure_increment} to the \(K\Lambda\)-action, we obtain an
integer \(h=h(M)\) and an \(hK\Lambda\)-ergodic component \(\nu'\) of \(\nu\)
such that
\[
        \nu'(B)
        \geq
        \sqrt{\nu(B)^2+\theta}
        \geq
        \nu(B)+\frac{\theta}{3}.
\]
Thus, unless the desired conclusion has already been reached, each step
increases the measure of \(B\) by at least \(\theta/3\).

This process cannot continue indefinitely, since every component \(\nu\)
satisfies \(\nu(B)\leq 1\). Hence it stops after at most
\[
        L=\left\lceil\frac{3}{\theta}\right\rceil
\]
steps. At the stopping time, let \(k=K\), where \(K\) is the current subgroup index in
the iteration. Then we have a \(k\Lambda\)-ergodic component \(\nu\) of \(\mu\),
with \(\nu(B)\geq\mu(B)\), and elements \(s_1,\ldots,s_{d-1}\in\Gamma\) such
that
\[
        \nu\left(
        \bigcup_{\tau\in k\bZ} \tau(s_1,\ldots,s_{d-1},1).B
        \right)
        >
        1-\varepsilon .
\]
Moreover, each enlargement factor in the iteration depends only on \(M\), and
the number of steps is at most \(L\). Since \(M\) depends only on
\(\delta,\varepsilon,\operatorname{rank}(\Gamma)\), and \(d\), the final
integer \(k\) is bounded by a constant
\[
        k_0=k_0(\delta,\varepsilon,\operatorname{rank}(\Gamma),d).
\]
This proves the theorem.
\end{proof}


\section{The dynamical trace-spectrum theorem}
\label{sec:dynamical-trace-spectrum}

\subsection{Statement of the dynamical theorem}

Let \(d\geq 2\), let \(\Gamma<\bR\) be a finitely generated subgroup, and set
\[
        \Lambda=\Gamma^{d-1}\times\bZ\subseteq\bR^d .
\]
If \(v_1,\ldots,v_d\in\Lambda\), we write
\[
        A(v_1,\ldots,v_d)
        =
        [\,v_1\ \cdots\ v_d\,]\in\Mat_d(\bR)
\]
for the matrix with columns \(v_1,\ldots,v_d\). We also write
\[
        \mathbf{Tr}(v_1,\ldots,v_d)
        :=
        \mathbf{Tr}(A(v_1,\ldots,v_d)).
\]

Let \(\Lambda\curvearrowright (X,\mu)\) be an ergodic measure-preserving
action, and let \(B\subset X\) be measurable with \(\mu(B)>0\). We define the
\emph{dynamical trace spectrum} of \(B\) by
\[
        \cT(B)
        =
        \left\{
        \mathbf{Tr}(v_1,\ldots,v_d):
        v_1,\ldots,v_d\in\Lambda
        \text{ and }
        \mu(B\cap v_1.B\cap\cdots\cap v_d.B)>0
        \right\}.
\]

\begin{theorem}\label{Thm: dynamical_trace_spectrum}
Let \(\delta>0\), let \(d\geq 2\), and let
\(\Lambda=\Gamma^{d-1}\times\bZ\), where \(\Gamma<\bR\) is a finitely generated
subgroup. Then there exists
\[
        q=q(\delta,\operatorname{rank}(\Gamma),d)\in\bN
\]
such that the following holds. For every ergodic measure-preserving action
\(\Lambda\curvearrowright (X,\mu)\) and every measurable set \(B\subset X\)
with \(\mu(B)\geq\delta\),
\[
        q^2\Gamma\times q^3\Gamma\times\cdots\times q^d\Gamma
        \subseteq
        \cT(B).
\]
\end{theorem}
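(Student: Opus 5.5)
The plan is to combine \Cref{Thm: Quant_direc_exp} with iterated quantitative recurrence (\Cref{Lem: Poincare_Quant}) to realize the columns of a model matrix \(Q=Q(s,m,n,t)\). \Cref{Lem: finite_index_trace_lattice} then converts the set of admissible \(n\) into a trace lattice.

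Recall the columns of \(Q\). For \(1\le j\le d-2\) we have \(Qe_j=m_je_{j+1}\). Next, \(Qe_{d-1}=m_{d-1}(s,1)\). Finally \(Qe_d=(n,0)+t(s,1)\), where \((n,0)\in\Gamma^{d-1}\times\{0\}\). It therefore suffices to produce \(s\) and a fixed \(m\) (independent of \(n\)) such that, for every \(n\) in a suitable subgroup, some \(t\) gives \(\mu(B\cap Qe_1.B\cap\cdots\cap Qe_d.B)>0\).

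First fix the constants. Set \(\delta_0=\delta\) and \(\delta_j=\delta_{j-1}^2/2\) for \(1\le j\le d-1\), and let \(\varepsilon=\delta_{d-1}\). Apply \Cref{Thm: Quant_direc_exp} with \(\delta,\varepsilon\). This gives \(1\le k\le k_0(\delta,\varepsilon,\operatorname{rank}\Gamma,d)\), a \(k\Lambda\)-ergodic component \(\nu=\mu(\cdot\cap C)/\mu(C)\) with \(\nu(B)\ge\delta\), and \(s\in\Gamma^{d-1}\) such that
\[
\nu\Bigl(\bigcup_{\tau\in k\bZ}\tau(s,1).B\Bigr)>1-\varepsilon .
\]
Since \(\nu\ll\mu\), positivity of \(\nu\)-measures of intersections implies positivity for \(\mu\). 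Also, every element of \(k\Lambda\) preserves \(\nu\).

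Next build the first \(d-1\) columns by iterated recurrence. Apply \Cref{Lem: Poincare_Quant} to the \(\bZ\)-action generated by \(ke_2\) on \((X,\nu)\) (using the bound \(\nu(B)>\delta_0/2\), say, harmlessly adjusting constants). This gives \(0<|\ell_1|\le\ell_0(\delta_0)\) with \(B_1=B\cap k\ell_1e_2.B\) of measure \(>\delta_1\). Then apply it to \(B_1\) and the generator \(ke_3\), and continue. At the last step use the generator \(k(s,1)\). Each \(B_j\) is an intersection of translates of \(B\) that contains the required ones, and \(\nu(B_{d-1})>\delta_{d-1}=\varepsilon\). Set \(m_j=k\ell_j\). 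Then \(0<|m_j|\le k_0\,\ell_0(\delta_{d-1})=:N\), where \(N\) depends only on \(\delta,\operatorname{rank}\Gamma,d\), and
\[
B':=B\cap Qe_1.B\cap\cdots\cap Qe_{d-1}.B\supseteq B_{d-1}.
\]
The vector \(m\) and the direction \(s\) are now fixed, independently of \(n\).

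For the last column, take any \(n\in(k\Gamma)^{d-1}\). Then \((n,0)\in k\Lambda\) preserves \(\nu\), so \(\nu\bigl(\bigcup_{\tau\in k\bZ}((n,0)+\tau(s,1)).B\bigr)>1-\varepsilon\). Since \(\nu(B')>\varepsilon\), some \(t\in k\bZ\) satisfies \(\nu(B'\cap Qe_d.B)>0\), and hence \(\mathbf{Tr}(Q(s,m,n,t))\in\cT(B)\).

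Now read off the trace lattice using \Cref{Lem: ck-formula-Q}. The shifts \(s_{d-k}m_{d-k}\) lie in \(k\Gamma\) because \(k\mid m_j\). So, as in the proof of \Cref{Lem: finite_index_trace_lattice}, as \(n\) ranges over \((k\Gamma)^{d-1}\) the coordinate \(c_i\) ranges over \(\pm k\prod_{j=d-i+1}^{d-1}m_j\,\Gamma\), independently in each coordinate, and \(t\) does not matter. Take \(q=N!\). Then \(k\prod_{j=d-i+1}^{d-1}m_j\) is a product of \(i\) integers of absolute value at most \(N\), so it divides \(q^i\). Hence \(q^i\Gamma\subseteq k\prod_{j}m_j\,\Gamma\), which gives \(q^2\Gamma\times\cdots\times q^d\Gamma\subseteq\cT(B)\) with \(q\) depending only on \(\delta,\operatorname{rank}\Gamma,d\).

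The main obstacle is the simultaneous intersection and its uniformity. The recurrence constants must be chosen before invoking the directional expansion, so that \(\varepsilon=\delta_{d-1}\) beats the measure of \(B'\). One must also check that all translations used, \(ke_j\), \(k(s,1)\) and \((n,0)\), lie in \(k\Lambda\), so that they preserve the component \(\nu\).
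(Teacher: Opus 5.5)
Your proposal is correct and follows essentially the same route as the paper. The paper's proof (via \Cref{Lem: model_configurations}) also applies directional expansion with $\varepsilon$ set to the last iterated-recurrence constant, runs recurrence along $kA_se_2,\ldots,kA_se_d$ (these are exactly your $ke_2,\ldots,ke_{d-1},k(s,1)$) inside the $k\Lambda$-component, gets the last column by a pigeonhole argument against the $(n,0)$-translated union, and reads off the lattice from \Cref{Lem: ck-formula-Q}. The differences are cosmetic: the paper starts the recurrence chain at $\delta/2$ to get the strict inequality you flagged, and where you take $q=N!$ it takes $q$ divisible by every possible $k\prod_j|m_j|$, which works equally well (just bound $|\ell_j|$ by $\lfloor 2/\delta_{d-1}\rfloor$ rather than by $\ell_0(\delta_{d-1})$).
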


The proof combines the directional expansion theorem from
\Cref{Thm: Quant_direc_exp} with the algebraic trace model from
\Cref{sec:algebraic-trace-model}. We first find configurations whose edge
matrices have the form \(Q(s,m,n,t)\). The trace calculation from
\Cref{Lem: finite_index_trace_lattice} then gives the required product
subgroup.


\subsection{Finding the required configurations}

We first connect the model matrices from \Cref{sec:algebraic-trace-model} to
configurations in \(\Lambda\). Let
\[
        s=(s_1,\ldots,s_{d-1})\in\Gamma^{d-1},
        \qquad
        m=(m_1,\ldots,m_{d-1})\in\bZ^{d-1},
\]
let \(r=(r_1,\ldots,r_{d-1})\in\Gamma^{d-1}\), and let \(t\in\bZ\). Put
\[
        u=(s_1,\ldots,s_{d-1},1)\in\Lambda .
\]
If \(Q=Q(s,m,r,t)\), then the columns of \(Q\) are
\[
        Qe_j=m_jA_se_{j+1},\qquad 1\leq j\leq d-1,
\]
and
\[
        Qe_d=(r_1,\ldots,r_{d-1},0)+tu .
\]
Thus \(Q\) is the edge matrix of the ordered tuple
\[
        \bigl(a,\,
        a+Qe_1,\,
        \ldots,\,
        a+Qe_d\bigr)
\]
for any \(a\in\Lambda\).

The next lemma is the dynamical input needed for the trace-spectrum theorem.

\begin{lemma}\label{Lem: model_configurations}
Let \(\delta>0\). Then there exist
\[
        h_0=h_0(\delta,\operatorname{rank}(\Gamma),d)\in\bN
        \qquad\text{and}\qquad
        L=L(\delta,\operatorname{rank}(\Gamma),d)\in\bN
\]
with the following property: For every ergodic measure-preserving action
\(\Lambda\curvearrowright (X,\mu)\) and every measurable set \(B\subset X\)
with \(\mu(B)\geq\delta\), there exist an integer \(1\leq h\leq h_0\), elements
\(s_1,\ldots,s_{d-1}\in\Gamma\), and non-zero integers
\(m_1,\ldots,m_{d-1}\), with
\[
        m_j\in h\bZ
        \qquad\text{and}\qquad
        |m_j|\leq L,
        \qquad 1\leq j\leq d-1,
\]
such that, for every \(r=(r_1,\ldots,r_{d-1})\in h\Gamma^{d-1}\), there exists
\(t\in h\bZ\) for which, with
\[
        s=(s_1,\ldots,s_{d-1}),
        \qquad
        m=(m_1,\ldots,m_{d-1}),
        \qquad
        Q=Q(s,m,r,t),
\]
one has
\[
        \mu\bigl(
        B\cap Qe_1.B\cap\cdots\cap Qe_d.B
        \bigr)>0 .
\]
\end{lemma}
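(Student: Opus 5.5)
The plan is to obtain the direction \(u=(s_1,\ldots,s_{d-1},1)\) and the ergodic component from \Cref{Thm: Quant_direc_exp}, and then to build the first \(d-1\) columns of \(Q\) by iterated quantitative Poincar\'e recurrence inside that component. The last column is then supplied by directional expansion.

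\emph{The columns of \(Q\).} For \(j\le d-2\) we have \(A_se_{j+1}=e_{j+1}\), while \(A_se_d=u\). Hence
\[
Qe_j=m_je_{j+1}\ (1\le j\le d-2),\qquad Qe_{d-1}=m_{d-1}u,\qquad Qe_d=(r,0)+tu .
\]
So we need \(B\cap m_1e_2.B\cap\cdots\cap m_{d-2}e_{d-1}.B\cap m_{d-1}u.B\cap((r,0)+tu).B\) to have positive measure.

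\emph{Step 1: a uniform recurrence bound.} Define \(\delta_0=\delta/2\) and \(\delta_{j}=\delta_{j-1}^2/4\). Then \(\delta_{d-1}\) depends only on \(\delta\) and \(d\). Set \(\varepsilon=\delta_{d-1}/2\) and apply \Cref{Thm: Quant_direc_exp} with \(\delta,\varepsilon\). This gives \(1\le k\le k_0(\delta,\varepsilon,\operatorname{rank}\Gamma,d)\), a \(k\Lambda\)-ergodic component \(\nu\) with \(\nu(B)\ge\mu(B)\ge\delta>\delta_0\), and \(s\in\Gamma^{d-1}\) such that
\[
\nu\Bigl(\bigcup_{\tau\in k\bZ}\tau u.B\Bigr)>1-\varepsilon .
\]
Take \(h=k\) and \(h_0=k_0\). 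It is essential that \(\varepsilon\) is fixed before \(\nu\) is known. This is possible because the recurrence losses below are uniform in the action, so \(\varepsilon\) depends only on \(\delta\) and \(d\).

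\emph{Step 2: iterated recurrence in \(\nu\).} The measure \(\nu\) is \(k\Lambda\)-invariant, so each of the \(\bZ\)-actions \(\ell\mapsto \ell k e_{j+1}\) (for \(j\le d-2\)) and \(\ell\mapsto \ell k u\) preserves \(\nu\). Put \(B_0=B\).

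At step \(j\le d-2\), apply \Cref{Lem: Poincare_Quant} to \(B_{j-1}\), which satisfies \(\nu(B_{j-1})>\delta_{j-1}\), under the action of \(ke_{j+1}\). This gives \(0<|\ell_j|\le\ell_0(\delta_{j-1})\) with
\[
B_j:=B_{j-1}\cap \ell_jke_{j+1}.B_{j-1},\qquad \nu(B_j)>\delta_{j-1}^2/2>\delta_j .
\]
Set \(m_j=k\ell_j\). At the last step, do the same along \(ku\) to get \(m_{d-1}=k\ell_{d-1}\) and \(D:=B_{d-1}\) with \(\nu(D)>\delta_{d-1}\). Since \(v.(X\cap Y)=v.X\cap v.Y\), induction gives
\[
D\subseteq B\cap m_1e_2.B\cap\cdots\cap m_{d-2}e_{d-1}.B\cap m_{d-1}u.B .
\]
Moreover \(m_j\in h\bZ\setminus\{0\}\) and \(|m_j|\le k_0\max_i\ell_0(\delta_i)=:L\), which depends only on \(\delta,\operatorname{rank}\Gamma,d\).

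\emph{Step 3: the last column.} Given \(r\in h\Gamma^{d-1}\), the vector \((r,0)\) lies in \(k\Lambda\), so it preserves \(\nu\). Therefore
\[
\nu\Bigl(\bigcup_{\tau\in k\bZ}((r,0)+\tau u).B\Bigr)>1-\varepsilon .
\]
Intersecting with \(D\) gives a set of \(\nu\)-measure greater than \(\nu(D)-\varepsilon>\delta_{d-1}/2>0\). By countable additivity, some \(t\in k\bZ\) satisfies \(\nu(D\cap((r,0)+tu).B)>0\).

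Finally, \(\nu=\mu(\,\cdot\,\cap C)/\mu(C)\) for some set \(C\) by \Cref{Prop: erg_com_k}, so \(\nu\)-positive sets are \(\mu\)-positive. Hence \(\mu(B\cap Qe_1.B\cap\cdots\cap Qe_d.B)>0\).

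The main point needing care is the order of quantifiers. \(\varepsilon\) must be chosen from the uniform recurrence bounds \(\delta_{d-1}\) before \Cref{Thm: Quant_direc_exp} is invoked, and all the \(m_j\) must be chosen independently of \(r\). Steps 2 and 3 respect this, since only \(t\) depends on \(r\).
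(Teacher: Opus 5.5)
Your proposal is correct and follows the paper's proof essentially step for step. Both arguments fix the iterated-recurrence thresholds first, choose the expansion parameter from them, and invoke \Cref{Thm: Quant_direc_exp}; they then run quantitative Poincar\'e recurrence in the component $\nu$ along $hA_se_2,\ldots,hA_se_d$ to fix $m$ independently of $r$, use the $(r,0)$-translate of the expanded set to find $t\in h\bZ$, and pass back to $\mu$ by absolute continuity. The only differences are cosmetic choices of constants: you take $\delta_j=\delta_{j-1}^2/4$ and $\varepsilon=\delta_{d-1}/2$, where the paper takes $\alpha_{i+1}=\alpha_i^2/2$ and $\varepsilon_0=\alpha_{d-1}$.
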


\begin{proof}
We first record the repeated form of \Cref{Lem: Poincare_Quant} that will be
used. Choose numbers
\[
        \alpha_0=\frac{\delta}{2},
        \qquad
        \alpha_{i+1}=\frac{\alpha_i^2}{2},
        \qquad 0\leq i\leq d-2,
\]
and set
\[
        \varepsilon_0=\alpha_{d-1}.
\]
Let \(\ell_0\) be the maximum of the constants supplied by
\Cref{Lem: Poincare_Quant} for \(\alpha_0,\ldots,\alpha_{d-2}\). Then the
following holds: If an abelian group \(G\) acts by measure-preserving
transformations on a probability space \((X,\nu)\), if \(C\subset X\) satisfies
\(\nu(C)\geq\delta\), and if \(w_1,\ldots,w_{d-1}\in G\), then there exist
non-zero integers \(\ell_1,\ldots,\ell_{d-1}\), with
\[
        |\ell_j|\leq \ell_0,
        \qquad 1\leq j\leq d-1,
\]
such that
\[
        \nu\bigl(
        C\cap \ell_1w_1.C\cap\cdots\cap \ell_{d-1}w_{d-1}.C
        \bigr)>\varepsilon_0 .
\]
Indeed, one applies \Cref{Lem: Poincare_Quant} successively to the
\(\bZ\)-actions generated by \(w_1,\ldots,w_{d-1}\). At each step the set to
which recurrence is applied has measure \(>\alpha_i\), and the resulting set is
contained in the intersection displayed above.

Apply \Cref{Thm: Quant_direc_exp} with parameters \(\delta\) and
\(\varepsilon_0\). Let
\[
        h_0=k_0(\delta,\varepsilon_0,\operatorname{rank}(\Gamma),d)
\]
be the corresponding bound. We obtain an integer \(1\leq h\leq h_0\), an
\(h\Lambda\)-ergodic component \(\nu\) of \(\mu\), and elements
\(s_1,\ldots,s_{d-1}\in\Gamma\) such that \(\nu(B)\geq\mu(B)\geq\delta\) and,
with
\[
        u=(s_1,\ldots,s_{d-1},1),
\]
one has
\[
        \nu\left(
        \bigcup_{\tau\in h\bZ} \tau u.B
        \right)>1-\varepsilon_0 .
\]
We now apply the repeated recurrence statement to the \(h\Lambda\)-action on
\((X,\nu)\), with \(C=B\), using the elements
\[
        hA_se_2,\ hA_se_3,\ \ldots,\ hA_se_d
        \in h\Lambda .
\]
Thus there exist non-zero integers \(\ell_1,\ldots,\ell_{d-1}\), with
\[
        |\ell_j|\leq\ell_0,
        \qquad 1\leq j\leq d-1,
\]
such that, after setting
\[
        m_j=h\ell_j,\qquad 1\leq j\leq d-1,
\]
the set
\[
        B'
        =
        B\cap m_1A_se_2.B
        \cap\cdots\cap
        m_{d-1}A_se_d.B
\]
satisfies
\[
        \nu(B')>\varepsilon_0 .
\]
Let now \(r=(r_1,\ldots,r_{d-1})\in h\Gamma^{d-1}\), and put
\[
        g_r=(r_1,\ldots,r_{d-1},0)\in h\Lambda .
\]
Since \(g_r\in h\Lambda\) and \(\nu\) is \(h\Lambda\)-invariant,
\[
        \nu\left(
        \bigcup_{\tau\in h\bZ} (g_r+\tau u).B
        \right)
        =
        \nu\left(
        g_r.\bigcup_{\tau\in h\bZ}\tau u.B
        \right)
        >
        1-\varepsilon_0 .
\]
Together with \(\nu(B')>\varepsilon_0\), this implies that there exists
\(t\in h\bZ\) such that
\[
        \nu\bigl(B'\cap (g_r+tu).B\bigr)>0 .
\]
Since \(\nu\) is an \(h\Lambda\)-ergodic component of \(\mu\), it is absolutely
continuous with respect to \(\mu\). Hence
\[
        \mu\bigl(B'\cap (g_r+tu).B\bigr)>0 .
\]
By the definitions of \(B'\), \(g_r\), and \(u\), this says
\[
        \mu\bigl(
        B\cap m_1A_se_2.B
        \cap\cdots\cap
        m_{d-1}A_se_d.B
        \cap ((r_1,\ldots,r_{d-1},0)+tu).B
        \bigr)>0 .
\]
The vectors appearing here are precisely the columns of \(Q(s,m,r,t)\). Thus
\[
        \mu\bigl(
        B\cap Qe_1.B\cap\cdots\cap Qe_d.B
        \bigr)>0 .
\]
Finally, since \(1\leq h\leq h_0\) and \(|\ell_j|\leq\ell_0\), we have
\[
        |m_j|\leq h_0\ell_0,
        \qquad 1\leq j\leq d-1.
\]
Thus the lemma holds with \(L=h_0\ell_0\).
\end{proof}


\subsection{Proof of the dynamical theorem}

We now prove \Cref{Thm: dynamical_trace_spectrum}. Let \(\delta>0\), let
\(d\geq 2\), and let \(\Lambda=\Gamma^{d-1}\times\bZ\), where
\(\Gamma<\bR\) is finitely generated. Let \(\Lambda\curvearrowright (X,\mu)\)
be an ergodic measure-preserving action, and let \(B\subset X\) be measurable
with \(\mu(B)\geq\delta\).

Apply \Cref{Lem: model_configurations}. We obtain an integer
\(1\leq q\leq q_0(\delta,\operatorname{rank}(\Gamma),d)\), elements
\(s_1,\ldots,s_{d-1}\in\Gamma\), and non-zero integers
\(m_1,\ldots,m_{d-1}\), with
\[
        m_j\in q\bZ
        \qquad\text{and}\qquad
        |m_j|\leq L(\delta,\operatorname{rank}(\Gamma),d),
        \qquad 1\leq j\leq d-1,
\]
such that, for every \(r=(r_1,\ldots,r_{d-1})\in q\Gamma^{d-1}\), there exists
\(t\in q\bZ\) such that, with \(s=(s_1,\ldots,s_{d-1})\),
\(m=(m_1,\ldots,m_{d-1})\), and \(Q=Q(s,m,r,t)\), one has
\[
        \mu\bigl(
        B\cap Qe_1.B\cap\cdots\cap Qe_d.B
        \bigr)>0 .
\]
It follows from the definition of \(\cT(B)\) that
\[
        (c_2(Q),\ldots,c_d(Q))\in\cT(B)
\]
for every such \(r\).

We now determine the set of trace tuples obtained as \(r\) ranges over
\(q\Gamma^{d-1}\). By \Cref{Lem: ck-formula-Q}, for \(2\leq k\leq d-1\),
\[
        c_k(Q(s,m,r,t))
        =
        (-1)^{k-1}
        \bigl(r_{d-k+1}+s_{d-k}m_{d-k}\bigr)
        \prod_{j=d-k+1}^{d-1}m_j,
\]
and
\[
        c_d(Q(s,m,r,t))
        =
        (-1)^{d-1}r_1\prod_{j=1}^{d-1}m_j .
\]
Since \(m_j\in q\bZ\) for every \(j\), we have
\(s_{d-k}m_{d-k}\in q\Gamma\). Hence, as \(r\) ranges over
\(q\Gamma^{d-1}\), the trace tuples obtained contain the product subgroup
\[
        q m_{d-1}\Gamma
        \times
        q m_{d-2}m_{d-1}\Gamma
        \times\cdots\times
        q\left(\prod_{j=1}^{d-1}m_j\right)\Gamma .
\]
Thus this product subgroup is contained in \(\cT(B)\).

Choose an integer \(\kappa_0\) divisible by
\[
        q,\quad |m_1|,\quad\ldots,\quad |m_{d-1}|.
\]
For example, one may take
\[
        \kappa_0=q\prod_{j=1}^{d-1}|m_j|.
\]
Then, for every \(2\leq k\leq d\), the integer \(\kappa_0^k\) is divisible by
\[
        q\prod_{j=d-k+1}^{d-1}|m_j|,
\]
where for \(k=d\) the product is \(\prod_{j=1}^{d-1}|m_j|\). Therefore
\[
        \kappa_0^k\Gamma
        \subseteq
        q\left(\prod_{j=d-k+1}^{d-1}m_j\right)\Gamma,
        \qquad 2\leq k\leq d.
\]
Consequently,
\[
        \kappa_0^2\Gamma\times \kappa_0^3\Gamma
        \times\cdots\times \kappa_0^d\Gamma
        \subseteq
        \cT(B).
\]

Finally, since \(1\leq q\leq q_0(\delta,\operatorname{rank}(\Gamma),d)\) and
\(|m_j|\leq L(\delta,\operatorname{rank}(\Gamma),d)\), we may choose a single
integer
\[
        \kappa=\kappa(\delta,\operatorname{rank}(\Gamma),d)
\]
which is divisible by every possible value of \(\kappa_0\) arising from the
above construction. Replacing \(\kappa_0\) by this \(\kappa\), we get
\[
        \kappa^2\Gamma\times \kappa^3\Gamma\times\cdots\times \kappa^d\Gamma
        \subseteq
        \cT(B).
\]
Renaming \(\kappa\) as \(q\) proves \Cref{Thm: dynamical_trace_spectrum}.


\section{Proofs of the main theorems}

We now deduce the two main results from the dynamical trace-spectrum theorem.

\subsection{The discrete density theorem}

\begin{proof}[Proof of \Cref{thm:discrete-main}]
Let \(E\subseteq\bZ^d\) have positive upper Banach density, and set
\[
        \delta=d^*(E)>0.
\]
We identify \(\bZ^d\) with \(\Gamma^{d-1}\times\bZ\), where \(\Gamma=\bZ\).
By \Cref{Prop: FCP_Lambda}, there exist an ergodic measure-preserving action
\(\bZ^d\curvearrowright (X,\mu)\) and a measurable set \(B\subset X\), with
\(\mu(B)=\delta\), such that for every \(v_1,\ldots,v_d\in\bZ^d\),
\[
        \mu\bigl(
        B\cap v_1.B\cap\cdots\cap v_d.B
        \bigr)
        \leq
        d^*_{\bZ^d}\bigl(
        E\cap(E-v_1)\cap\cdots\cap(E-v_d)
        \bigr).
\]
Apply \Cref{Thm: dynamical_trace_spectrum} with \(\Gamma=\bZ\). There exists
\(q=q(\delta,d)\in\bN\) such that
\[
        q^2\bZ\times q^3\bZ\times\cdots\times q^d\bZ
        \subseteq
        \cT(B).
\]
Let
\[
        (a_2,\ldots,a_d)
        \in
        q^2\bZ\times q^3\bZ\times\cdots\times q^d\bZ .
\]
By the definition of \(\cT(B)\), there exist \(v_1,\ldots,v_d\in\bZ^d\) such
that
\[
        \mathbf{Tr}([\,v_1\ \cdots\ v_d\,])=(a_2,\ldots,a_d)
\]
and
\[
        \mu\bigl(
        B\cap v_1.B\cap\cdots\cap v_d.B
        \bigr)>0 .
\]
The correspondence principle therefore gives
\[
        d^*_{\bZ^d}\bigl(
        E\cap(E-v_1)\cap\cdots\cap(E-v_d)
        \bigr)>0 .
\]
In particular, the intersection is non-empty. Choose
\(x\in E\cap(E-v_1)\cap\cdots\cap(E-v_d)\), and set
\[
        \mathbf v=(x,x+v_1,\ldots,x+v_d)\in E^{d+1}.
\]
Then
\[
        A_{\mathbf v}=[\,v_1\ \cdots\ v_d\,],
\]
and hence
\[
        \mathbf{Tr}(A_{\mathbf v})=(a_2,\ldots,a_d).
\]
Since \((a_2,\ldots,a_d)\) was arbitrary, we have
\[
        q^2\bZ\times q^3\bZ\times\cdots\times q^d\bZ
        \subseteq
        \TraceSpec(E).
\]
This proves \Cref{thm:discrete-main}.
\end{proof}

\subsection{The Euclidean colouring theorem}

We shall use the following product theorem of Graham
\cite[Theorem 3]{Graham_1980}. Let \(G\) be an abelian group. A family
\(\mathscr F\) of finite subsets of \(G\) is called a \emph{Ramsey family} if,
for every finite colouring \(G=G_1\sqcup\cdots\sqcup G_r\), there exist
\(F\in\mathscr F\), \(g\in G\), and \(1\leq i\leq r\) such that
\[
        F+g\subseteq G_i .
\]

\begin{theorem}[Graham's product theorem]\label{Thm: Graham_product}
Let \(G\) be an abelian group, and let
\((\mathscr F_\alpha)_{\alpha\in I}\) be a collection of Ramsey families of
finite subsets of \(G\). Then, for every finite colouring
\(G=G_1\sqcup\cdots\sqcup G_r\), there exists \(1\leq i_0\leq r\) such that,
for every \(\alpha\in I\), there exist \(F_\alpha\in\mathscr F_\alpha\) and
\(g_\alpha\in G\) with
\[
        F_\alpha+g_\alpha\subseteq G_{i_0}.
\]
\end{theorem}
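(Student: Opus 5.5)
The plan is to argue by contradiction, using finite compactness versions of the Ramsey families and an iterated pigeonhole scheme of Hales--Jewett type. Fix a colouring $G=G_1\sqcup\cdots\sqcup G_r$ and suppose no $i_0$ works. Then for each colour $i\in\{1,\ldots,r\}$ there is an index $\alpha_i\in I$ such that no translate of any member of $\mathscr F_{\alpha_i}$ lies in $G_i$. Write $\mathscr F^{(i)}:=\mathscr F_{\alpha_i}$. Only these $r$ families will be used, so the arbitrary size of $I$ plays no further role.

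First, I would upgrade each $\mathscr F^{(i)}$ to a finite statement by compactness. The claim is that for every $N\in\bN$ there is a finite set $S\subseteq G$ such that every $N$-colouring of $S$ admits $F\in\mathscr F^{(i)}$ and $g$ with $F+g\subseteq S$ monochromatic. To prove it, suppose it fails for every finite $S$. For each finite $S$, the set of colourings $c\in\{1,\ldots,N\}^G$ whose restriction to $S$ has no monochromatic translate inside $S$ is closed, since it is determined by coordinates in $S$. It is non-empty by assumption. These sets decrease as $S$ increases. By Tychonoff, their intersection contains a colouring of $G$ with no monochromatic translate of any $F\in\mathscr F^{(i)}$. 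Here one uses that each $F$ is finite, so each translate lies in some finite $S$. This contradicts the Ramsey property, which is assumed for all finite colourings and in particular for $N$ colours.

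Next, the product step. I would choose finite sets $S_1,\ldots,S_r$ recursively. Let $S_1$ be the finite set for $\mathscr F^{(1)}$ with $N=r$. Given $S_1,\ldots,S_{j-1}$, let $T_{j-1}=S_1+\cdots+S_{j-1}$, and let $S_j$ be the finite set for $\mathscr F^{(j)}$ with $N_j=r^{|T_{j-1}|}$ colours. Then I would descend from $j=r$ to $j=1$:
\begin{itemize}
\item Colour $x\in S_r$ by the pattern $y\mapsto c(x+y)$ on $T_{r-1}$, where $c$ is the original colouring. This gives a translate $F_r+g_r\subseteq S_r$ on which the pattern is constant.
\item Fix one $x_r\in F_r+g_r$, and colour $x\in S_{r-1}$ by the pattern $y\mapsto c(x_r+x+y)$ on $T_{r-2}$. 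This gives $F_{r-1}+g_{r-1}$ with constant pattern.
\item Continue in the same way down to $S_1$.
\end{itemize}
Unwinding the recursion, the colour $c(x_1+\cdots+x_r)$ for $x_j\in F_j+g_j$ does not depend on any of the $x_j$. Call this colour $i$.

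Finally, fix all $x_j$ with $j\neq i$ and let $x_i$ range over $F_i+g_i$. This produces a translate of $F_i\in\mathscr F^{(i)}=\mathscr F_{\alpha_i}$ inside $G_i$, contradicting the choice of $\alpha_i$.

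The step requiring most care is the bookkeeping in the descending induction. One must check that constancy of the pattern at level $j$, after fixing $x_r,\ldots,x_{j+1}$, passes to the lower levels. This works because the pattern at level $j$ records all colours on $x_{j+1}+\cdots+x_r+x_j+T_{j-1}$. Constancy in $x_j$ therefore persists after any later choices of $x_{j-1},\ldots,x_1$ inside $T_{j-1}$. The compactness step is routine but is the only place where finiteness of the members of each family is used.
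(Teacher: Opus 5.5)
Your proof is correct. Note that the paper contains no proof of this statement to compare it with: it is quoted from Graham's 1980 paper (Theorem 3 there) and used as a black box in the proof of \Cref{thm:euclidean-main}. What you give is a self-contained proof along the classical lines for such product statements. Negating the conclusion leaves one avoided family $\mathscr F_{\alpha_i}$ per colour. Compactness then turns each of these $r$ Ramsey families into a finite set $S_j$ that is Ramsey for $N_j$ colours. Finally, an iterated ``pattern'' colouring on $S_1+\cdots+S_r$ produces the contradiction.

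Each step checks out. The compactness step is a finite-intersection argument in $\{1,\ldots,N\}^G$ with sets determined by finitely many coordinates. It works for uncountable $G$ such as $\bR^d$, which is the case the paper needs. With the convention $T_0=\{0\}$ your recursion starts correctly with $N_1=r$.

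The constancy claim is justified by telescoping. First replace $x_r$ by the point fixed at level $r$, using $x_1+\cdots+x_{r-1}\in T_{r-1}$. Then replace $x_{r-1}$ by its fixed point, using $x_1+\cdots+x_{r-2}\in T_{r-2}$, and continue down to $x_1$. It would be cleaner to write $x_j^\ast$ for the points fixed during the descent, since you also use $x_j$ for the free variables.

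Your argument also yields slightly more than the statement requires. The whole Minkowski sum $(F_1+g_1)+\cdots+(F_r+g_r)$ is monochromatic, and it contains translates of members of all $r$ chosen families at once. The argument is purely combinatorial, which fits the paper's remark that no measurability is assumed on the colour classes.
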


If \(\mathbf v=(v_0,\ldots,v_m)\) is an ordered tuple in \(G\), we write
\[
        \supp(\mathbf v)=\{v_0,\ldots,v_m\}
\]
for its underlying finite set. Thus a monochromatic translate of
\(\supp(\mathbf v)\) gives a monochromatic translate of the ordered tuple
\(\mathbf v\).

\begin{proof}[Proof of \Cref{thm:euclidean-main}]
Let
\[
        \bR^d=C_1\sqcup\cdots\sqcup C_r
\]
be a finite colouring. We first show that each prescribed trace tuple is
realized monochromatically in some colour.

Fix
\[
        a=(a_2,\ldots,a_d)\in\bR^{d-1},
\]
and set \(\delta=1/r\). Choose an integer \(\kappa\) divisible by
\[
        q(\delta,\rho,d),
        \qquad 1\leq \rho\leq d,
\]
where \(q(\delta,\rho,d)\) denotes the integer supplied by
\Cref{Thm: dynamical_trace_spectrum} for finitely generated subgroups of
\(\bR\) of rank \(\rho\). Define
\[
        \Gamma
        =
        \left\langle
        1,\frac{a_2}{\kappa^2},\frac{a_3}{\kappa^3},\ldots,
        \frac{a_d}{\kappa^d}
        \right\rangle
        <\bR
\]
and set \(\Lambda=\Gamma^{d-1}\times\bZ\subseteq\bR^d\). Then
\(\operatorname{rank}(\Gamma)\leq d\). Restrict the colouring to \(\Lambda\).
Since
\[
        \Lambda=(C_1\cap\Lambda)\sqcup\cdots\sqcup(C_r\cap\Lambda),
\]
there exists \(1\leq i\leq r\) such that
\[
        d^*_\Lambda(C_i\cap\Lambda)\geq \frac1r=\delta .
\]
Set \(E=C_i\cap\Lambda\). By \Cref{Prop: FCP_Lambda}, there exist an ergodic
measure-preserving action \(\Lambda\curvearrowright(X,\mu)\) and a measurable
set \(B\subset X\), with \(\mu(B)=d^*_\Lambda(E)\geq\delta\), such that the
correspondence inequality holds for all finite intersections.

Apply \Cref{Thm: dynamical_trace_spectrum} to this action. Let \(q\) be the
integer supplied by that theorem. Since \(q\) divides \(\kappa\), and since
\[
        a_k\in \kappa^k\Gamma\subseteq q^k\Gamma,
        \qquad 2\leq k\leq d,
\]
we get
\[
        a=(a_2,\ldots,a_d)\in\cT(B).
\]
Thus there exist \(v_1,\ldots,v_d\in\Lambda\) such that
\[
        \mathbf{Tr}([\,v_1\ \cdots\ v_d\,])=a
\]
and
\[
        \mu\bigl(
        B\cap v_1.B\cap\cdots\cap v_d.B
        \bigr)>0 .
\]
The correspondence principle gives
\[
        d^*_\Lambda\bigl(
        E\cap(E-v_1)\cap\cdots\cap(E-v_d)
        \bigr)>0 .
\]
Hence this intersection is non-empty. Choose
\(x\in E\cap(E-v_1)\cap\cdots\cap(E-v_d)\), and set
\[
        \mathbf v=(x,x+v_1,\ldots,x+v_d)\in C_i^{d+1}.
\]
Then
\[
        A_{\mathbf v}=[\,v_1\ \cdots\ v_d\,],
\]
and therefore
\[
        \mathbf{Tr}(A_{\mathbf v})=a.
\]
This proves that, for every \(a\in\bR^{d-1}\), some colour class contains an
ordered \((d+1)\)-tuple with trace tuple \(a\).

We now use Graham's product theorem to make the colour independent of \(a\).
For each \(a\in\bR^{d-1}\), let \(\mathscr F_a\) be the family of all finite
sets of the form \(\supp(\mathbf v)\), where
\[
        \mathbf v=(v_0,\ldots,v_d)\in(\bR^d)^{d+1}
        \qquad\text{and}\qquad
        \mathbf{Tr}(A_{\mathbf v})=a .
\]
The previous paragraph shows that \(\mathscr F_a\) is a Ramsey family in the
additive group \(\bR^d\). Applying \Cref{Thm: Graham_product} to the collection
\((\mathscr F_a)_{a\in\bR^{d-1}}\), there exists a colour \(C_{i_0}\) such
that, for every \(a\in\bR^{d-1}\), there exist
\(\mathbf v_a=(v_{a,0},\ldots,v_{a,d})\) with
\(\mathbf{Tr}(A_{\mathbf v_a})=a\) and \(g_a\in\bR^d\) such that
\[
        g_a+\supp(\mathbf v_a)\subseteq C_{i_0}.
\]

Since \(g_a+\supp(\mathbf v_a)\subseteq C_{i_0}\), the ordered tuple
\[
        g_a+\mathbf v_a
        :=
        (g_a+v_{a,0},\ldots,g_a+v_{a,d})
\]
belongs to \(C_{i_0}^{d+1}\). Since translating all entries of an ordered tuple
does not change its edge matrix, we have
\[
        \mathbf{Tr}(A_{g_a+\mathbf v_a})
        =
        \mathbf{Tr}(A_{\mathbf v_a})
        =
        a.
\]
Thus, for every \(a\in\bR^{d-1}\), the colour class \(C_{i_0}\) contains an
ordered tuple \(\mathbf w\in C_{i_0}^{d+1}\) such that
\[
        \mathbf{Tr}(A_{\mathbf w})=a.
\]
Hence
\[
        \TraceSpec(C_{i_0})=\bR^{d-1}.
\]
This proves \Cref{thm:euclidean-main}.
\end{proof}

\section{The linear trace obstruction}

\subsection{The Euclidean obstruction}

\begin{proof}[Proof of \Cref{obs:euclidean-trace}]
We first prove the two-dimensional statement. Let
\[
        I=\left[0,\frac{3}{7}\right)^2
        \subseteq \bR^2.
\]
Partition the half-open square \([0,3)^2\) into \(49\) translates of \(I\), and
give each translate a different colour. Then extend this colouring periodically
by translations in \(3\bZ^2\). Thus each colour class has the form
\[
        C=(x_0,y_0)+I+3\bZ^2
\]
for some \((x_0,y_0)\in[0,3)^2\).

Fix one colour class \(C\), and let
\[
        \mathbf v=(v_0,v_1,v_2)\in C^3.
\]
Write \(v_j=(x_j,y_j)\). Since all three points lie in the same colour class,
there exist \(p_j,q_j\in\bZ\) and \(\xi_j,\eta_j\in[0,3/7)\) such that
\[
        x_j=x_0+\xi_j+3p_j,
        \qquad
        y_j=y_0+\eta_j+3q_j,
        \qquad j=0,1,2.
\]
The edge matrix is
\[
        A_{\mathbf v}
        =
        \begin{pmatrix}
        x_1-x_0 & x_2-x_0\\
        y_1-y_0 & y_2-y_0
        \end{pmatrix},
\]
and therefore
\[
        \trace(A_{\mathbf v})
        =
        (x_1-x_0)+(y_2-y_0).
\]
Using the decompositions above, we get
\[
        \trace(A_{\mathbf v})
        \in
        \left(-\frac{6}{7},\frac{6}{7}\right)+3\bZ .
\]
In particular,
\[
        1\notin
        \{\trace(A_{\mathbf v}):
        \mathbf v=(v_0,v_1,v_2)\in C^3\}.
\]
This proves the two-dimensional obstruction. Since the same conclusion holds
even without assuming affine independence, it also holds for the subset of
affinely independent triples.

For \(d\geq 2\), the same construction gives a finite colouring of \(\bR^d\).
Partition \([0,3)^d\) into half-open cubes of side length \(3/N\), where
\(N>3d\), give these cubes distinct colours, and extend periodically by
\(3\bZ^d\). If \(\mathbf v=(v_0,\ldots,v_d)\) is monochromatic, then each
diagonal entry of \(A_{\mathbf v}\) belongs to
\[
        \left(-\frac{3}{N},\frac{3}{N}\right)+3\bZ.
\]
Hence
\[
        \trace(A_{\mathbf v})
        \in
        \left(-\frac{3d}{N},\frac{3d}{N}\right)+3\bZ.
\]
Since \(N>3d\), this set does not contain \(1\). Thus no colour class realizes
all values of \(c_1(A_{\mathbf v})=\trace(A_{\mathbf v})\). This proves the
Euclidean obstruction in every dimension.
\end{proof}

\subsection{The discrete obstruction}

\begin{proof}[Proof of \Cref{obs:discrete-trace}]
Let \(\alpha\in\bR/\bZ\) be irrational, and write
\[
        \|x\|_{\bR/\bZ}
\]
for the distance from \(x\) to \(0\) in \(\bR/\bZ\). Define
\[
        E
        =
        \left\{
        n=(n_1,\ldots,n_d)\in\bZ^d:
        \|n_j\alpha\|_{\bR/\bZ}<\frac{1}{6d}
        \text{ for every } 1\leq j\leq d
        \right\}.
\]
By equidistribution, \(E\) has positive upper Banach density.

Let \(\mathbf v=(v_0,\ldots,v_d)\in E^{d+1}\), and write
\(v_i=(v_{i,1},\ldots,v_{i,d})\). Then
\[
        \trace(A_{\mathbf v})
        =
        \sum_{j=1}^d (v_{j,j}-v_{0,j}).
\]
Since both \(v_j\) and \(v_0\) belong to \(E\), we have
\[
        \|(v_{j,j}-v_{0,j})\alpha\|_{\bR/\bZ}
        <
        \frac{1}{3d},
        \qquad 1\leq j\leq d.
\]
Therefore
\[
        \|\trace(A_{\mathbf v})\alpha\|_{\bR/\bZ}
        <
        \frac13 .
\]
It follows that
\[
        \{\trace(A_{\mathbf v}):
        \mathbf v\in E^{d+1}\}
        \subseteq
        \left\{
        n\in\bZ:
        \|n\alpha\|_{\bR/\bZ}<\frac13
        \right\}.
\]
Now let \(q\in\bN\). Since \(\alpha\) is irrational, the subgroup
\[
        q\bZ\alpha
        =
        \{qn\alpha:n\in\bZ\}
\]
is dense in \(\bR/\bZ\). Hence there exists \(n\in q\bZ\) such that
\[
        \|n\alpha\|_{\bR/\bZ}\geq \frac13 .
\]
Thus \(q\bZ\) is not contained in the trace set above. Since \(q\) was
arbitrary, the set of linear traces does not contain any finite-index subgroup
of \(\bZ\). The same conclusion holds after restricting to affinely independent
tuples, and this proves \Cref{obs:discrete-trace}.
\end{proof}

\crefname{appendix}{appendix}{appendices}
\Crefname{appendix}{Appendix}{Appendices}

\appendix
\renewcommand{\thesection}{\Alph{section}}

\section{Trace calculations}
\label[appendix]{sec: trace_calc}

In this appendix we prove \Cref{Lem: ck-formula-Q}. Recall that
\[
        Q=Q(s,m,n,t)=A_sM_{m,n,t}.
\]
Its columns have the following form. For \(1\leq j\leq d-2\),
\[
        Qe_j=m_je_{j+1},
\]
while
\[
        Qe_{d-1}
        =
        m_{d-1}(s_1,\ldots,s_{d-1},1)^T
\]
and
\[
        Qe_d
        =
        (n_1+s_1t,\ldots,n_{d-1}+s_{d-1}t,t)^T .
\]
We shall compute the principal minors of \(Q\). For
\(I\subseteq\{1,\ldots,d\}\), let \(Q_I\) denote the principal submatrix with
rows and columns indexed by \(I\). Since, for \(1\leq j\leq d-2\), the
\(j\)-th column of \(Q\) has its only possibly non-zero entry in row \(j+1\),
we have
\[
        \det(Q_I)=0
        \qquad\text{if } j\in I,\ j+1\notin I
        \text{ for some } 1\leq j\leq d-2.
\]
It follows that, for \(2\leq k\leq d-1\), the only principal minors of order
\(k\) which can be non-zero are those corresponding to
\[
        I_1=\{d-k,d-k+1,\ldots,d-1\}
\]
and
\[
        I_2=\{d-k+1,d-k+2,\ldots,d\}.
\]
Hence
\[
        c_k(Q)=\det(Q_{I_1})+\det(Q_{I_2}),
        \qquad 2\leq k\leq d-1.
\]

We first compute \(\det(Q_{I_1})\). Put \(p=d-k\). Then
\(I_1=\{p,p+1,\ldots,d-1\}\). The relevant submatrix is
\[
        Q_{I_1}
        =
        \begin{pmatrix}
        0 & 0 & \cdots & 0 & s_p m_{d-1}\\
        m_p & 0 & \cdots & 0 & s_{p+1}m_{d-1}\\
        0 & m_{p+1} & \cdots & 0 & s_{p+2}m_{d-1}\\
        \vdots & \vdots & \ddots & \vdots & \vdots\\
        0 & 0 & \cdots & m_{d-2} & s_{d-1}m_{d-1}
        \end{pmatrix}.
\]
The only non-zero term in its determinant expansion is obtained by taking the
top-right entry, together with the subdiagonal entries. Thus
\[
        \det(Q_{I_1})
        =
        (-1)^{k-1}s_p m_{d-1}
        \prod_{j=p}^{d-2}m_j .
\]
Since \(p=d-k\), this is
\[
        \det(Q_{I_1})
        =
        (-1)^{k-1}
        s_{d-k}m_{d-k}
        \prod_{j=d-k+1}^{d-1}m_j .
\]
We now compute \(\det(Q_{I_2})\). Put \(p=d-k+1\), so that
\(I_2=\{p,p+1,\ldots,d\}\). Inside this \(k\times k\) submatrix, the first
\(k-2\) columns have the subdiagonal entries
\[
        m_p,m_{p+1},\ldots,m_{d-2}.
\]
The last two columns are
\[
        m_{d-1}u
        \qquad\text{and}\qquad
        n'+tu,
\]
where
\[
        u=(s_p,\ldots,s_{d-1},1)^T,
        \qquad
        n'=(n_p,\ldots,n_{d-1},0)^T .
\]
By multilinearity of the determinant, the contribution from the term \(tu\) in
the last column vanishes, since it is linearly dependent with the preceding
column \(m_{d-1}u\). Therefore
\[
        \det(Q_{I_2})
        =
        \det(C_1,\ldots,C_{k-2},m_{d-1}u,n'),
\]
where \(C_1,\ldots,C_{k-2}\) denote the first \(k-2\) columns.

In any non-zero term of this determinant, we see that the columns
\(C_1,\ldots,C_{k-2}\) must use the rows \(2,\ldots,k-1\). The remaining rows
are \(1\) and \(k\). Since the last entry of \(n'\) is \(0\), the last column
must use row \(1\), contributing \(n_p\), while the column \(m_{d-1}u\) uses
row \(k\), contributing \(m_{d-1}\). Hence
\[
        \det(Q_{I_2})
        =
        (-1)^{k-1}
        n_p
        \prod_{j=p}^{d-1}m_j .
\]
Since \(p=d-k+1\), this gives
\[
        \det(Q_{I_2})
        =
        (-1)^{k-1}
        n_{d-k+1}
        \prod_{j=d-k+1}^{d-1}m_j .
\]
Combining the two computations, for \(2\leq k\leq d-1\) we obtain
\[
        c_k(Q)
        =
        (-1)^{k-1}
        \bigl(n_{d-k+1}+s_{d-k}m_{d-k}\bigr)
        \prod_{j=d-k+1}^{d-1}m_j .
\]
It remains to compute \(c_d(Q)=\det(Q)\). This is the same calculation as the
one for \(I_2\), with \(p=1\) and \(k=d\). Thus
\[
        \det(Q)
        =
        (-1)^{d-1}
        n_1
        \prod_{j=1}^{d-1}m_j .
\]
This proves \Cref{Lem: ck-formula-Q}.

\end{document}